\providecommand{\U}[1]{\protect\rule{.1in}{.1in}}
\newtheorem{theorem}{Theorem}
\newtheorem{corollary}[theorem]{Corollary}
\newtheorem{definition}[theorem]{Definition}
\newtheorem{example}[theorem]{Example}
\newtheorem{lemma}[theorem]{Lemma}
\newtheorem{proposition}[theorem]{Proposition}
\newtheorem{remark}[theorem]{Remark}
\theoremstyle{definition}
\numberwithin{equation}{section}
\newcommand{\resumename}{R\'esum\'e}
\begin{document}
\date{\today}
\title[Dihedral Group Frames]{Dihedral Group Frames which Are Maximally Robust to Erasures}
\author[V. Oussa]{Vignon Oussa}
\address{(4): Dept.\ of Mathematics\\
Bridgewater State University\\
Bridgewater, MA 02324 U.S.A.\\}
\email{Vignon.Oussa@bridgew.edu}
\keywords{Linearly independent frames, Haar property}
\subjclass[2000]{15A15,42C15}
\maketitle

\begin{abstract}
Let $n$ be a natural number larger than two. Let $D_{2n}=\langle r,s :
r^{n}=s^{2}=e, srs=r^{n-1} \rangle$ be the Dihedral group, and $\kappa $ an $n$-dimensional unitary representation of $D_{2n}$ acting in $\mathbb{C}^n$ as follows. $(\kappa (r)v)(j)=v((j-1)\mod n)$ and $(\kappa(s)v)(j)=v((n-j)\mod n)$ for $v\in\mathbb{C}^n.$ For any representation which is unitarily equivalent to $\kappa,$ we prove that when $n$ is prime there exists a
Zariski open subset $E$ of $\mathbb{C}^{n}$ such that for any vector $v\in E,$
any subset of cardinality $n$ of the orbit of $v$ under the action of this representation is a basis for
$\mathbb{C}^{n}.$ However, when $n$ is even there is no vector in
$\mathbb{C}^{n}$ which satisfies this property. As a result, we derive that if
$n$ is prime, for almost every (with respect to Lebesgue measure) vector $v$ in $\mathbb{C}^{n}$ the $\Gamma
$-orbit of $v$ is a frame which is maximally robust to erasures. We also
consider the case where $\tau$ is equivalent to an irreducible unitary representation of the
Dihedral group acting in a vector space $\mathbf{H}_{\tau}\in\left\{\mathbb{C},\mathbb{C}^2\right\}$ and we provide
conditions under which it is possible to find a vector $v\in\mathbf{H}_{\tau}$
such that $\tau\left(  \Gamma\right)  v$ has the Haar property.

\end{abstract}

\section{Introduction}

Let $\mathfrak{F}$ be a set of $m\geq n$ vectors in an $n$-dimensional vector
space $\mathbb{K}^{n}$ over a field $\mathbb{K}\in\left\{  \mathbb{R}%
,\mathbb{C}\right\}  .$ We say that $\mathfrak{F}$ has the \textbf{Haar
property} if any subset of $\mathfrak{F}$ of cardinality $n$ is a basis for
$\mathbb{K}^{n}$. Let
\[
T=\left(
\begin{array}
[c]{ccccc}%
0 & 0 & \cdots & 0 & 1\\
1 & 0 & \cdots & 0 & 0\\
0 & 1 & \ddots & \vdots & \vdots\\
\vdots & \ddots & \ddots & 0 & 0\\
0 & \cdots & 0 & 1 & 0
\end{array}
\right)  \text{ and }M=\left(
\begin{array}
[c]{ccccc}%
1 &  &  &  & \\
& \exp\left(  \frac{2\pi i}{n}\right)   &  &  & \\
&  & \exp\left(  \frac{4\pi i}{n}\right)   &  & \\
&  &  & \ddots & \\
&  &  &  & \exp\left(  \frac{2\pi i\left(  n-1\right)  }{n}\right)
\end{array}
\right)
\]
be two invertible matrices with complex entries. The group generated by these
matrices is isomorphic to the finite Heisenberg group
\[
\mathrm{Heis}\left(  n\right)  =\left\{  \left(
\begin{array}
[c]{ccc}%
1 & m & k\\
0 & 1 & l\\
0 & 0 & 1
\end{array}
\right)  :\left(  k,m,l\right)  \in\mathbb{Z}_{n}\times\mathbb{Z}_{n}%
\times\mathbb{Z}_{n}\right\}
\]
which is a nilpotent group. It is well-known that (see \cite{Pfander}) if $n$
is prime then there exists a Zariski open set $E$ of $\mathbb{C}^{n}$ such
that for every vector $v\in E,$ the set $\left\{  M^{l}T^{k}v:1\leq l,k\leq
n\right\}  $ has the Haar property. This special property has some important
application in the theory of frames. We recall that a frame\textbf{ (}see
\cite{Casazza}\textbf{)} in a Hilbert space is a sequence of vectors $\left(
u_{k}\right)  _{k\in I}$ with the property that there exist constants $a,b$
which are strictly positive such that for any vector $u$ in the given Hilbert
space, we have
\begin{equation}
a\left\Vert u\right\Vert ^{2}\leq\sum_{k\in I}\left\vert \left\langle
u,u_{k}\right\rangle \right\vert ^{2}\leq b\left\Vert u\right\Vert
^{2}.\label{definition}%
\end{equation}
The constant values $a,b$ are called the frame bounds of the frame. It can be
derived from (\ref{definition}) that a frame in a finite-dimensional vector
space is simply a spanning set for the vector space. Thus, every basis is a
frame. However, it is not the case that every frame is a basis. Indeed, frames
are generally linearly dependent sets. Let $\pi$ be a unitary representation
of a group $G$ acting in a Hilbert space $\mathbf{H}_{\pi}.$ Let
$v\in\mathbf{H}_{\pi}.$ Any set of the type $\pi(G)v$ which is a frame is
called a $G$-frame.

A frame $\left(  x_{k}\right)  _{k\in I}$ in an $n$-dimensional vector space
is maximally robust to erasures if the removal of any $l\leq m-n$ vectors from
the frame leaves a frame (see \cite{Pfander}, \cite{Casazza} Section $5.$)
Coming back to the example of the Heisenberg group previously discussed, it is
proved in \cite{Pfander} that if $n$ is prime then the set $\left\{
M^{j}T^{k}v:\left(  k,j\right)  \in\mathbb{Z}_{n}^{2}\right\}  $ is maximally
robust to erasures for almost every (with respect to Lebesgue measure)
$v\in\mathbb{C}^{n}.$ In the present work, we consider a variation of this
example. The main objective of this paper is to prove that there exists a
class of Dihedral group frames which are maximally robust to erasures.

Let $n$ be a natural number greater than two. Let $D_{2n}$ be the Dihedral
group of order $2n.$ A presentation of the Dihedral group is:
\[
D_{2n}=\left\langle r,s:r^{n}=s^{2}=1,srs=r^{n-1}\right\rangle .
\]
Next, we define a monomorphism $\kappa:D_{2n}\rightarrow GL\left(
n,\mathbb{\mathbb{C}}\right)  $ such that
\begin{equation}
\kappa\left(  r\right)  =\left(
\begin{array}
[c]{ccccc}%
0 & 0 & \cdots & 0 & 1\\
1 & 0 & \cdots & 0 & 0\\
0 & 1 & \ddots & \vdots & \vdots\\
\vdots & \ddots & \ddots & 0 & 0\\
0 & \cdots & 0 & 1 & 0
\end{array}
\right)  ,\kappa\left(  s\right)  =\left(
\begin{array}
[c]{ccccc}%
1 & 0 & \cdots & 0 & 0\\
0 & 0 & \cdots & 0 & 1\\
\vdots & \vdots & \udots & \udots & 0\\
0 & 0 & 1 & \udots & \vdots\\
0 & 1 & 0 & \cdots & 0
\end{array}
\right)  . \label{kappa}%
\end{equation}
Clearly $\kappa$ is a finite dimensional unitary representation of the
Dihedral group which is reducible. Put%
\[
A=\kappa\left(  r\right)  \text{ and }B=\kappa\left(  s\right)  .
\]
Next, let $\Gamma$ be a finite subgroup of $GL\left(  n,\mathbb{\mathbb{C}%
}\right)  $ which is generated by the matrices $A$ and $B.$ We are interested
in the following questions.\vskip0.5 cm

\noindent\textbf{Problem 1} Let $\alpha$ be a unitary representation of the
Dihedral acting in a Hilbert space $\mathbf{H}_{\alpha}.$ Suppose that
$\alpha$ is unitarily equivalent to $\kappa.$ Under which conditions is it
possible to find a vector $v\in\mathbf{H}_{\alpha}$ such that the set
$\left\{  \alpha\left(  x\right)  v:x\in D_{2n}\right\}  $ has the Haar property?

\noindent\textbf{Problem 2} Let $\tau$ be a unitary irreducible representation
of $D_{2n}$ acting in a finite-dimensional Hilbert space $\mathbf{H}_{\tau}.$
Under which conditions is it possible to find a vector $v\in\mathbf{H}_{\tau}$
such that $\tau\left(  D_{2n}\right)  v=\left\{  \tau\left(  x\right)  v:x\in
D_{2n}\right\}  $ has the Haar property?

To reformulate the problems above, put $D_{2n}=\left\{  x_{1},\cdots
,x_{2n}\right\}  .$ Let $\alpha$ be a representation of the Dihedral
group\ acting in $\mathbf{H}_{\alpha}$ which is either irreducible and unitary
or is equivalent to $\kappa$. We would like to investigate conditions under
which it is possible to find
\[
v=\left(
\begin{array}
[c]{ccc}%
v_{0}, & \cdots & ,v_{\dim\left(  \mathbf{H}_{\alpha}\right)  -1}%
\end{array}
\right)  \in\mathbb{C}^{\dim\left(  \mathbf{H}_{\alpha}\right)  }%
\]
such that every minor of order $\dim\left(  \mathbf{H}_{\alpha}\right)  $ of
the $2n\times\dim\left(  \mathbf{H}_{\alpha}\right)  $ matrix
\[
\left(
\begin{array}
[c]{ccc}%
\left(  x_{1}v\right)  _{0} & \cdots & \left(  x_{1}v\right)  _{\dim
\mathbf{H}_{\alpha}-1}\\
\vdots &  & \vdots\\
\left(  x_{2n}v\right)  _{0} & \cdots & \left(  x_{2n}v\right)  _{\dim
\mathbf{H}_{\alpha}-1}%
\end{array}
\right)
\]
is nonzero. Here is a summary of the main results of the paper.

\begin{theorem}
\label{main result} Let $n$ be a natural number larger than two, and let
$\alpha$ be a representation which is equivalent to $\kappa.$ The following
holds true:

\begin{enumerate}
\item If $n$ is even then it is not possible to find a vector $v\in
\mathbf{H}_{\alpha}$ such that $\alpha\left(  D_{2n}\right)  v$ has the Haar property.

\item If $n$ is prime then there exists a Zariski open set $E\subset
\mathbf{H}_{\alpha}$ such that for any $v\in E,$ $\alpha\left(  D_{2n}\right)
v$ has the Haar property.

\item If $n$ is prime then there exists a Zariski open set $E\subset
\mathbf{H}_{\alpha}$ such that for any $v\in E$, $\alpha\left(  D_{2n}\right)
v$ is a frame in $\mathbb{C}^{n}$ which is maximally robust to erasures.
\end{enumerate}
\end{theorem}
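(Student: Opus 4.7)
Since $\alpha\simeq\kappa$, a change of basis preserves linear independence; we therefore work with $\kappa$. Set $A=\kappa(r)$, $B=\kappa(s)$, and let $V_{\pm}=\{w\in\mathbb{C}^{n}:Bw=\pm w\}$. For Part~(1), observe that $B$ permutes the standard basis by $e_{j}\mapsto e_{n-j}$, so for even $n$ it has two fixed basis vectors ($e_{0}$ and $e_{n/2}$) and $(n-2)/2$ transposed pairs, giving $\dim V_{-}=n/2-1$. For every $v\in\mathbb{C}^{n}$ and every $k$, the vector $w_{k}:=A^{k}v-BA^{k}v$ satisfies $Bw_{k}=-w_{k}$, hence lies in $V_{-}$; the $n/2$ vectors $w_{0},\ldots,w_{n/2-1}$ therefore admit a non-trivial relation $\sum c_{k}w_{k}=0$, which rewrites as $\sum c_{k}A^{k}v=\sum c_{k}BA^{k}v$. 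If some $w_{k}$ vanishes then the orbit rows for $r^{k}$ and $sr^{k}$ coincide; if none does, the rewritten identity is a genuine linear relation among the $n$ orbit vectors $\{A^{k}v,BA^{k}v:0\le k\le n/2-1\}$. Either way the Haar property fails for every~$v$.

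\noindent Now suppose $n$ is prime. Choose a unitary $F$ with $FAF^{\ast}=\operatorname{diag}(1,\omega,\ldots,\omega^{n-1})$, $\omega=e^{2\pi i/n}$, in which basis $FBF^{\ast}$ acts by $e_{k}\mapsto e_{-k\bmod n}$. Writing $\hat v=Fv$, the orbit rows become
\[
\kappa(r^{m})v\leftrightarrow(\omega^{mk}\hat v_{k})_{k=0}^{n-1},\qquad\kappa(r^{m}s)v\leftrightarrow(\omega^{mk}\hat v_{-k})_{k=0}^{n-1}.
\]
For every pair of subsets $S,T\subset\{0,\ldots,n-1\}$ with $|S|+|T|=n$, the associated $n\times n$ subdeterminant is a polynomial $P_{S,T}(\hat v_{0},\ldots,\hat v_{n-1})$, and the Haar property for $v$ is equivalent to $P_{S,T}(Fv)\ne 0$ for all admissible $(S,T)$. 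Since there are only finitely many such pairs, the complement $E$ of $\bigcup_{S,T}\{P_{S,T}=0\}$ is Zariski open, and it is non-empty provided we verify that no $P_{S,T}$ is identically zero.

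\noindent\textit{The main step} is this non-vanishing statement. My plan is to specialise $\hat v_{k}=\beta^{k}$ for an indeterminate $\beta$ and show the resulting univariate polynomial in $\beta$ is non-zero. Under this substitution each type-$A$ row ($m\in S$) equals the Vandermonde row $((\omega^{m}\beta)^{k})_{k=0}^{n-1}$, while each type-$B$ row ($m\in T$) coincides with $\beta^{n}((\omega^{m}\beta^{-1})^{k})_{k=0}^{n-1}$ in every column $k\ge1$ but equals $1$ (instead of $\beta^{n}$) at $k=0$. Writing the resulting matrix as $M=M_{V}+(1-\beta^{n})\,\mathbf{1}_{T}e_{0}^{\top}$, where $M_{V}$ is the genuine Vandermonde-type matrix with nodes $\{\omega^{m}\beta:m\in S\}\cup\{\omega^{m}\beta^{-1}:m\in T\}$ (type-$B$ rows rescaled by $\beta^{n}$) and $\mathbf{1}_{T}$ is the indicator vector of the type-$B$ rows, the matrix-determinant lemma gives
\[
\det M=(\det M_{V})\,\bigl(1+(1-\beta^{n})\,e_{0}^{\top}M_{V}^{-1}\mathbf{1}_{T}\bigr).
\]
For generic $\beta$ the $n$ nodes are pairwise distinct, so $\det M_{V}$ is the non-zero Vandermonde product; the parenthesised correction factor equals $1$ whenever $\beta^{n}=1$, hence is not identically zero. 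The principal obstacle I anticipate is making this argument uniform in $(S,T)$: the edge cases $S=\emptyset$ or $T=\emptyset$ require separate (simpler) treatment, and one must rule out a global cancellation between the Vandermonde and the correction -- a cancellation which Part~(1) shows does occur for even $n$, so the use of primality must enter precisely here, presumably through the field property of $\mathbb{Z}_{n}$ that forbids accidental matchings among the exponents $\omega^{m'-m}$ hidden in $e_{0}^{\top}M_{V}^{-1}\mathbf{1}_{T}$.

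\noindent\textit{Part~(3)} is immediate from (2): when $\kappa(D_{2n})v$ has the Haar property it is a frame of $m=2n$ vectors in $\mathbb{C}^{n}$ with every $n$-subset a basis, so removing any $\ell\le m-n=n$ vectors leaves at least $n$ vectors which still contain a basis and therefore span $\mathbb{C}^{n}$. This is precisely maximal robustness to erasures.
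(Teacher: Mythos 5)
Your reduction to $\kappa$, your Part~(1), and your Part~(3) are all fine. In fact Part~(1) is a genuinely different and arguably cleaner argument than the paper's: the paper computes the explicit group-algebra identity $\sum_{k=0}^{(n-2)/2}\gamma A^{2k}=\sum_{k=0}^{(n-2)/2}\gamma A^{2k}B$ on the Fourier side (Proposition~\ref{even}), whereas you obtain a (vector-dependent) relation from the dimension count $\dim V_{-}=n/2-1<n/2$; both are valid. The problem is Part~(2), which is the heart of the theorem and where what you have written is a plan with an acknowledged hole rather than a proof.

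Concretely, two things go wrong in your main step. First, the specialization $\hat v_{k}=\beta^{k}$ is lossy: distinct monomials $\prod_{k}\hat v_{k}^{\alpha_{k}}$ with $|\alpha|=n$ collapse to the same power $\beta^{\sum_{k}k\alpha_{k}}$, so a nonzero multivariate $P_{S,T}$ could still vanish identically on the curve $(1,\beta,\dots,\beta^{n-1})$; proving it does not is not visibly easier than the original problem. Second, your proposed evaluation point need not exist: the factorization $\det M=\det(M_{V})\,g(\beta)$ only lets you read off $g(\omega^{j})=1$ when $\det M_{V}(\omega^{j})\neq0$, i.e.\ when the node sets $S+j$ and $T-j$ are disjoint in $\mathbb{Z}_{n}$, equivalently $(S+2j)\cap T=\emptyset$. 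Since $|S|+|T|=n$ this forces $S+2j$ to equal the complement of $T$, so $S$ must be a translate of $\mathbb{Z}_{n}\setminus T$ --- which already fails for $n=5$, $S=\{0,2\}$, $T=\{0,1,2\}$ (no translate of $\{3,4\}$ is $\{0,2\}$). For such $(S,T)$ every $n$-th root of unity makes two nodes collide, so you cannot certify $g\not\equiv0$ there, and away from roots of unity the ``global cancellation'' you flag is exactly the unresolved issue. The paper closes this gap by a different mechanism: it applies Laplace's expansion to $\delta_{\Lambda}(f)^{T}$ along the first $m$ columns to isolate the monomial $f_{0}\cdots f_{m-1}\cdot f_{1}\cdots f_{n-m}$; Lemma~\ref{impossible} (for odd $n$, two disjoint equal-cardinality subsets of $\{0,\dots,m-1\}$ and $\{m,\dots,n-1\}$ cannot both be closed under $k\mapsto -k\bmod n$) shows no other Laplace term contributes this monomial; and Chebotarev's theorem on minors of the DFT matrix (Lemma~\ref{minor cyclic}, where primality enters) shows its coefficient, a product of two such minors, is nonzero. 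You would need to import that combinatorial and number-theoretic input, or find a substitute for it; the matrix-determinant-lemma framing by itself does not supply it.
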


\begin{theorem}
\label{main 3}Let $\tau$ be an irreducible representation of the Dihedral
group acting in a vector space $\mathbf{H}_{\tau}.$

\begin{enumerate}
\item If $\tau$ is a character then for any nonzero complex number $z,$
$\tau\left(  D_{2n}\right)  z$ has the Haar property.

\item If $\tau$ is not a character and if $n$ is prime then for almost every
vector $v$ in $\mathbf{H}_{\tau}$, $\tau\left(  D_{2n}\right)  v$ has the Haar property.

\item If $\tau$ is not a character and if $n$ is even then there does not
exist a vector $v$ in $\mathbf{H}_{\tau}$ such that $\tau\left(
D_{2n}\right)  v$ has the Haar property.

\item Suppose that $n$ is a composite odd natural number.

\begin{enumerate}
\item There exists an irreducible representation $\tau^{\prime}$ of $D_{2n}$
acting in $\mathbf{H}_{\tau^{\prime}}$ such that for any vector in $v$ in
$\mathbf{H}_{\tau^{\prime}}$, $\tau^{\prime}\left(  D_{2n}\right)  v$ does not
have the Haar property.

\item There exists an irreducible representation $\tau^{\prime\prime}$ of
$D_{2n}$ acting in $\mathbf{H}_{\tau^{\prime\prime}}$ such that for almost
every vector in $v$ in $\mathbf{H}_{\tau^{\prime\prime}}$, $\tau^{\prime
\prime}\left(  D_{2n}\right)  v$ has the Haar property.
\end{enumerate}
\end{enumerate}
\end{theorem}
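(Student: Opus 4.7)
The plan is to reduce everything to an explicit computation using the standard classification of the irreducible representations of $D_{2n}$. Each irreducible $\tau$ is either a character on $\mathbf{H}_\tau = \mathbb{C}$, which disposes of part (1) at once: for any $z \neq 0$, every element of $\tau(D_{2n}) z$ is a unit scalar multiple of $z$, hence nonzero, and the Haar property on a one-dimensional space is exactly nonvanishing. Otherwise $\tau$ is equivalent to one of the two-dimensional irreducibles $\tau_k$, $1 \leq k < n/2$, which I realize concretely as
\[
\tau_k(r) = \mathrm{diag}\!\left(\zeta^k,\, \zeta^{-k}\right), \qquad \tau_k(s) = \begin{pmatrix} 0 & 1 \\ 1 & 0 \end{pmatrix}, \qquad \zeta = e^{2\pi i / n}.
\]

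For parts (2)--(4), I would write out the orbit of $v = (v_1, v_2)^\top$ explicitly as
\[
\tau_k(r^j) v = (\zeta^{jk} v_1,\ \zeta^{-jk} v_2)^\top, \qquad \tau_k(sr^j) v = (\zeta^{-jk} v_2,\ \zeta^{jk} v_1)^\top,
\]
and compute the $2 \times 2$ minors of the resulting $2n \times 2$ coefficient matrix. A direct calculation shows that these minors split into three families: rotation--rotation and reflection--reflection pairs both yield $\pm v_1 v_2 \bigl(\zeta^{k(j-l)} - \zeta^{-k(j-l)}\bigr)$, while the mixed rotation--reflection pairs yield $v_1^2 \zeta^{k(j+l)} - v_2^2 \zeta^{-k(j+l)}$. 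The Haar property for $\tau_k(D_{2n}) v$ is exactly the simultaneous nonvanishing of all three.

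The behaviour of the first two families is controlled by $d := \gcd(2k, n)$: if $d \geq 2$ then $j - l = n/d$ is an admissible nonzero difference and forces $\zeta^{2k(j-l)} = 1$, so the corresponding minor vanishes identically in $v$. When $n$ is even, $d \geq 2$ for every admissible $k$, which proves part (3). When $n$ is composite odd, choosing $k$ to be a prime divisor of $n$ (necessarily satisfying $k \leq \sqrt{n} < n/2$ for $n \geq 9$) yields the representation $\tau'$ of (4)(a), while the choice $k = 1$ (or any $k$ coprime to $n$) yields $\tau''$ of (4)(b). When $n$ is prime, every admissible $k$ is automatically coprime to $n$, so the first two families are nonvanishing precisely on the Zariski open set $\{v_1 v_2 \neq 0\}$, which is the setup for part (2).

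For (2) and (4)(b) it remains to handle the mixed family: each such minor defines a hypersurface of the form $\{v_1^2 = \lambda_m v_2^2\}$ for a root of unity $\lambda_m$ depending on $m = j + l$ and $k$. There are only finitely many such hypersurfaces, so together with $\{v_1 v_2 = 0\}$ they form a proper Zariski closed subset of $\mathbb{C}^2$; its complement is Zariski open and of full Lebesgue measure, and on it the Haar property holds. I expect the main obstacle to be organizational rather than conceptual: carefully enumerating the three minor families, pinning down the precise admissible range of $k$, distinguishing $\gcd(2k, n)$ for $n$ even from $\gcd(k, n)$ for $n$ odd, and checking that the representations exhibited in (4) are genuinely irreducible of dimension two (rather than reducible or characters).
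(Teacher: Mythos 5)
Your proposal is correct and follows essentially the same route as the paper: reduce to the explicit two-dimensional representations $\tau_k$, compute the three families of $2\times 2$ minors (rotation--rotation, reflection--reflection, mixed), and conclude via finitely many proper algebraic hypersurfaces; your $\gcd(2k,n)$ packaging is just a uniform restatement of the paper's explicit identities $\tau_j(r^{n/2})=\pm I$ and $\tau_{n_1}(r^{n_2})=I$. The only nitpick is the parenthetical ``$k\leq\sqrt{n}$'' in (4)(a), which holds only for the smallest prime divisor, though the needed bound $k<n/2$ holds for any proper prime divisor of an odd composite $n$ since then $n/k\geq 3$.
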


Our work is organized as follows. In the second section, we recall some
well-known facts about Fourier analysis on finite abelian groups, and the
Laplace's Expansion Formulas which are all crucial for the proofs of the main
results. The main results of the paper (Theorem \ref{main result}, and Theorem
\ref{main 3}) are proved in the third section of the paper. Finally, examples
are computed in the fourth section.

\section{Preliminaries}

Let us start by fixing some notations. Given a matrix $M,$ the transpose of $M$
is denoted $M^{T}.$ The determinant of a matrix $M$ is denoted by
$\mathrm{det}(M)$ or $|M|.$ The $k$th row of $M$ is denoted
$\operatorname{row}_{k}\left(  M\right)  $ and similarly, the $k$th column of
the matrix $M$ is denoted $\operatorname{col}_{k}\left(  M\right)  .$

Let $G$ be a group with a binary operation which we denote multiplicatively.
Let $E$ be a subset of $G.$ The set $E^{-1}$ is a subset of $G$ which contains
all inverses of elements of $E.$ More precisely, $E^{-1}=\left\{  a^{-1}:a\in
E\right\}  .$ For example, let $G$ be the cyclic group $\mathbb{Z}_{n}.$ Then
given any subset $E$ of $\mathbb{Z}_{n},$
\[
E^{-1}=\left\{  \left(  n-k\right)  \operatorname{mod}n:k\in E\right\}  .
\]
Let $G$ be a group acting on a set $S.$ We denote this action
multiplicatively. For any fixed element $s\in S,$ the $G$-orbit of $s$ is
described as $Gs=\left\{  gs:g\in G\right\}  .$

Let $\alpha$ be a unitary representation of a group $G$ acting in a Hilbert
space $\mathbf{H}_{\alpha}.$ That is, $\alpha$ is a homomorphism from $G$ into
the group of unitary matrices of order $\dim_{\mathbb{C}}\left(
\mathbf{H}_{\alpha}\right)  .$ We say that $\alpha$ is an irreducible
representation of $G$ if and only if the only subspaces of $\mathbf{H}%
_{\alpha}$ which are invariant under the action of $\alpha$ are the trivial
ones. For example a unitary character (a homomorphism from $G$ into the circle
group) is an irreducible unitary representation. Two unitary representations
$\alpha,\alpha^{\prime}$ of a group $G$ acting in $\mathbf{H}_{\alpha
},\mathbf{H}_{\alpha^{\prime}}$ respectively are equivalent if there exists a
unitary map $U:\mathbf{H}_{\alpha}\rightarrow\mathbf{H}_{\alpha^{\prime}}$
such that
\[
U\alpha\left(  x\right)  U^{-1}=\alpha^{\prime}\left(  x\right)  \text{ for
all }x\in G.
\]
We say that $U$ intertwines the representations $\alpha$ and $\alpha^{\prime
}.$ Let $M$ be a matrix. The transpose of $M$ is denoted $M^T.$ Next, let $z\in\mathbb{C}.$ The complex conjugate of $z$ is written as
$\overline{z}.$ The cardinality of a set $S$ is denoted $\mathrm{card}\left(
S\right)  .$ Throughout this paper, we shall always assume that $n$ is a
natural number larger than two.

\begin{lemma}
\label{intertwines}Let $\alpha,\alpha^{\prime}$ be two equivalent unitary
representations of a group $G.$ Let $U$ be a unitary map which intertwines the
representations $\alpha,\alpha^{\prime}$. Let $v\in$ $\mathbf{H}_{\alpha}.$
Then $\alpha\left(  G\right)  v$ has the Haar property if and only if
$\alpha^{\prime}\left(  G\right)  Uv$ has the Haar property.
\end{lemma}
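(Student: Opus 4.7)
The plan is to observe that an intertwining unitary transports orbits to orbits, and being a linear isomorphism it preserves linear independence; so the Haar property is transferred verbatim. Since no numerical inequalities or subtle determinant identities are involved, the proof should be short.

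First I would note that the intertwining relation $U\alpha(x)U^{-1}=\alpha'(x)$ gives, for every $x\in G$, the identity $\alpha'(x)Uv=U\alpha(x)v$. Applying this element by element, the $G$-orbit of $Uv$ under $\alpha'$ is the pointwise image of the $G$-orbit of $v$ under $\alpha$:
\[
\alpha'(G)Uv=\{U\alpha(x)v:x\in G\}=U\bigl(\alpha(G)v\bigr).
\]
Moreover, the correspondence $\alpha(x)v\mapsto\alpha'(x)Uv$ is a bijection of the two orbits indexed by the same set $G$, so it matches up $n$-element subsets one to one.

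Next I would use that $U$ is a unitary map between $\mathbf{H}_\alpha$ and $\mathbf{H}_{\alpha'}$, hence in particular an invertible linear map; the two spaces have the same (finite) dimension $n=\dim\mathbf H_\alpha=\dim\mathbf H_{\alpha'}$. Any invertible linear map sends linearly independent sets to linearly independent sets and conversely, so for every subfamily $\{\alpha(x_{k_1})v,\ldots,\alpha(x_{k_n})v\}$ of cardinality $n$ in the $\alpha$-orbit, its image $\{\alpha'(x_{k_1})Uv,\ldots,\alpha'(x_{k_n})Uv\}$ is a basis of $\mathbf H_{\alpha'}$ if and only if the original family is a basis of $\mathbf H_\alpha$.

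Combining these two observations, every $n$-subset of $\alpha(G)v$ is a basis exactly when the corresponding $n$-subset of $\alpha'(G)Uv$ is, which is the desired equivalence. There is no real obstacle here; the only point worth being explicit about is the bijection between $n$-subsets of the two orbits, which is needed so that one can verify the Haar condition set by set rather than on the whole orbit at once.
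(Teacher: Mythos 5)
Your proof is correct and follows essentially the same route as the paper's: the intertwining relation $U\alpha(x)v=\alpha'(x)Uv$ identifies the two orbits, and the invertibility of the unitary $U$ transports every $\dim\mathbf{H}_\alpha$-element subset that is a basis to a basis, in both directions. Your explicit remark about the bijection between $n$-subsets is a minor clarification the paper leaves implicit, but the argument is the same.
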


\begin{proof}
Let us suppose that $\alpha\left(  G\right)  v$ has the Haar property. Then
for any subset $K$ of $G$ of cardinality $\dim\mathbf{H}_{\alpha},$ the set
$\alpha\left(  H\right)  v$ is a basis for the vector space $\mathbf{H}%
_{\alpha}.$ Since $U$ is a unitary map, then $U\alpha\left(  H\right)  v$ is a
basis for $\mathbf{H}_{\alpha^{\prime}}.$ However, $U\alpha\left(  H\right)
v=\alpha^{\prime}\left(  H\right)  Uv.$ Thus, $\alpha^{\prime}\left(
H\right)  Uv$ is a basis for $\mathbf{H}_{\alpha^{\prime}}$ for any subset $H$
of $G$ of cardinality $\dim\mathbf{H}_{\alpha^{\prime}}.$ The proof of the
converse is obtained by using similar arguments, and we shall omit it.
\end{proof}

\subsection{Fourier Analysis on $\mathbb{Z}_{n}$}

Let $\mathbb{Z}_{n}=\left\{  0,1,\cdots,n-1\right\}  .$ We define the Hilbert
space
\[
l^{2}\left(  \mathbb{Z}_{n}\right)  =\left\{  f:\mathbb{Z}_{n}\rightarrow
\mathbb{C}\right\}
\]
which is the set of all complex-valued functions on $\mathbb{Z}_{n}$ endowed
with the following inner product:
\[
\left\langle \phi,\psi\right\rangle =\sum_{x\in\mathbb{Z}_{n}}\phi\left(
x\right)  \overline{\psi\left(  x\right)  }\text{ for }\phi,\psi\in
l^{2}\left(  \mathbb{Z}_{n}\right)  .
\]
The norm of a given vector $\phi$ in $l^{2}\left(  \mathbb{Z}_{n}\right)  $ is
computed as follows:%
\[
\left\Vert \phi\right\Vert _{l^{2}\left(  \mathbb{Z}_{n}\right)
}=\left\langle \phi,\phi\right\rangle ^{1/2}.
\]
We recall that the discrete Fourier transform is a map $\mathcal{F}%
:l^{2}\left(  \mathbb{Z}_{n}\right)  \rightarrow l^{2}\left(  \mathbb{Z}%
_{n}\right)  $ defined by
\[
\left(  \mathcal{F}\phi\right)  \left(  \xi\right)  =\frac{1}{n^{1/2}}%
\sum_{k\in\mathbb{Z}_{n}}\phi\left(  k\right)  \exp\left(  \frac{2\pi ik\xi
}{n}\right)  ,\text{ for }\phi\in l^{2}\left(  \mathbb{Z}_{n}\right)  .
\]
The following facts are also well-known (see \cite{Terras}). Firstly, the
discrete Fourier transform is a bijective linear operator. Secondly, the
Fourier inverse of a vector $\varphi$ is computed as follows:
\[
\mathcal{F}^{-1}\varphi\left(  k\right)  =\frac{1}{n^{1/2}}\sum_{\xi
\in\mathbb{Z}_{n}}\varphi\left(  \xi\right)  \exp\left(  -\frac{2\pi ik\xi}%
{n}\right)  .
\]
Finally, the Fourier transform is a unitary operator. More precisely, given
$\phi,\psi\in l^{2}\left(  \mathbb{Z}_{n}\right)  ,$ we have
\[
\left\langle \phi,\psi\right\rangle =\left\langle \mathcal{F}\phi
,\mathcal{F}\psi\right\rangle .
\]
We shall need the following lemma which is proved in \cite{Evans}.

\begin{lemma}
\label{minor cyclic}Let $\mathbf{F}$ be the matrix representation of the
Fourier transform. If $n$ is prime then every minor of $\mathbf{F}$ is nonzero.
\end{lemma}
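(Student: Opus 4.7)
This is Chebotarev's theorem on roots of unity, and the plan is to sketch the classical $\pi$-adic proof. Fix a $d \times d$ submatrix of $\mathbf{F}$ determined by row indices $J = \{j_1, \ldots, j_d\} \subset \mathbb{Z}_n$ and column indices $K = \{k_1, \ldots, k_d\} \subset \mathbb{Z}_n$; after factoring out the overall scalar $n^{-d/2}$, the associated minor is
\[
D := \det\bigl(\omega^{j_a k_b}\bigr)_{a,b=1}^{d}, \qquad \omega := e^{2\pi i/n},
\]
and the goal is to show $D \neq 0$ whenever $n$ is prime.

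I would work in the ring of cyclotomic integers $\mathbb{Z}[\omega]$. Since $n$ is prime, the element $\varpi := 1 - \omega$ is prime in $\mathbb{Z}[\omega]$ with $\varpi^{n-1}$ associate to $n$; in particular the discrete valuation $v_{\varpi}$ satisfies $v_{\varpi}(c) = 0$ for every integer $c$ coprime to $n$. The strategy is to compute $v_{\varpi}(D)$ exactly and check it is a finite integer.

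Substituting $\omega = 1 - \varpi$ and Taylor-expanding each entry yields
\[
\omega^{j_a k_b} = \sum_{m \geq 0} \binom{j_a k_b}{m}(-\varpi)^m,
\]
and by multilinearity of the determinant in its columns,
\[
D = \sum_{m_1,\dots,m_d \geq 0} (-\varpi)^{m_1 + \cdots + m_d}\, \det\!\left(\binom{j_a k_b}{m_b}\right)_{\!a,b}.
\]
Each entry $\binom{j_a k_b}{m_b}$ is a polynomial in $j_a$ of degree $m_b$ with leading coefficient $k_b^{m_b}/m_b!$. If $m_1 + \cdots + m_d < \binom{d}{2}$, pigeonhole forces two of the $m_b$ to coincide, and iterated column reduction by polynomial degree makes the inner determinant vanish. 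Hence $v_{\varpi}(D) \geq \binom{d}{2}$, with equality contributed only by multi-indices $(m_1, \ldots, m_d)$ that are permutations of $(0, 1, \ldots, d-1)$. For these, replacing each polynomial column by its leading monomial and resumming over permutations via the Leibniz rule identifies the coefficient of $\varpi^{\binom{d}{2}}$ in $D$, up to sign, as
\[
\frac{1}{0!\,1!\cdots(d-1)!}\, V(j_1,\dots,j_d)\, V(k_1,\dots,k_d),
\]
where $V$ denotes the Vandermonde determinant. Both Vandermondes are products of differences of distinct residues in $J$ and $K$ modulo $n$, hence coprime to $n$; the factorials are coprime to $n$ because $d \leq n$. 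Thus the leading coefficient is a $\varpi$-adic unit, so $v_{\varpi}(D) = \binom{d}{2} < \infty$, and $D \neq 0$.

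The main obstacle is the combinatorial bookkeeping in identifying the leading $\varpi$-coefficient as the product of two Vandermondes divided by the superfactorial; this is essentially the Cauchy--Binet identity applied to the matrix of Taylor coefficients on one side and the matrix of monomial evaluations $(j_a^{m})$ on the other. Once that step is in place, the primality of $n$ enters only through the key valuation fact that $v_{\varpi}(c) = 0$ for every integer $c$ with $\gcd(c,n) = 1$.
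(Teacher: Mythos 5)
The paper offers no proof of this lemma at all: it is Chebotarev's theorem on roots of unity, and the text simply cites Evans and Isaacs \cite{Evans}. Your sketch therefore cannot coincide with ``the paper's proof,'' but it is a correct outline of the standard valuation-theoretic argument, and it is close in spirit to Evans--Isaacs (they measure the order of vanishing at $x=1$ of a generalized Vandermonde polynomial using the irreducibility of $1+x+\cdots+x^{n-1}$ for $n$ prime; you measure the $\varpi$-adic valuation at the prime $\varpi=1-\omega$, which packages the same information). One step should be tightened: ``pigeonhole forces two of the $m_b$ to coincide, and iterated column reduction by polynomial degree makes the inner determinant vanish'' is not quite the right mechanism, since two equal degrees alone need not kill the determinant (for instance $(m_1,m_2,m_3)=(2,2,0)$ yields a nonzero determinant --- it just contributes at order $\varpi^4$). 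The correct statement is that $m_1+\cdots+m_d<\binom{d}{2}$ forces, for some $t$, more than $t+1$ of the columns to be polynomials in $j_a$ of degree at most $t$, hence linearly dependent; the same sorted-degree argument shows that the only multi-indices with $\sum_b m_b=\binom{d}{2}$ and nonvanishing inner determinant are the permutations of $(0,1,\ldots,d-1)$, as you assert. It is also worth recording that each $\binom{j_ak_b}{m}$ is an integer, so every term of the expansion is $(-\varpi)^{M}$ times an integer and has valuation at least $M=\sum_b m_b$; this is what prevents the higher-order terms from interfering with the unit leading coefficient $\pm V(j_1,\dots,j_d)V(k_1,\dots,k_d)/\bigl(0!\,1!\cdots(d-1)!\bigr)$. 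With those points made precise, your argument is complete, and it has the advantage of making the paper self-contained where it currently relies on an external reference.
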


We recall that
\[
A=\left(
\begin{array}
[c]{ccccc}%
0 & 0 & \cdots & 0 & 1\\
1 & 0 & \cdots & 0 & 0\\
0 & 1 & \ddots & \vdots & \vdots\\
\vdots & \ddots & \ddots & 0 & 0\\
0 & \cdots & 0 & 1 & 0
\end{array}
\right)  ,\text{ and }B=\left(
\begin{array}
[c]{ccccc}%
1 & 0 & \cdots & 0 & 0\\
0 & 0 & \cdots & 0 & 1\\
\vdots & \vdots & \udots & \udots & 0\\
0 & 0 & 1 & \udots & \vdots\\
0 & 1 & 0 & \cdots & 0
\end{array}
\right)  .
\]
Identifying $l^{2}\left(  \mathbb{Z}_{n}\right)  $ with $%
\mathbb{C}
^{n}$ via the map
\[
v\mapsto\left(
\begin{array}
[c]{ccc}%
v\left(  0\right)  & \cdots & v\left(  n-1\right)
\end{array}
\right)  ^{T},
\]
we may write
\[
Av\left(  j\right)  =v\left(  \left(  j-1\right)  \operatorname{mod}n\right)
\text{ and }Bv\left(  j\right)  =v\left(  \left(  n-j\right)
\operatorname{mod}n\right)  .
\]

\begin{lemma}
\label{FAB}For any $\xi\in\mathbb{Z}_{n},$ we have $\left(  \mathcal{F}%
Bv\right)  \left(  \xi\right)  =\left(  \mathcal{F}v\right)  \left(  \left(
n-\xi\right)  \operatorname{mod}n\right)  $ and $\left(  \mathcal{F}Av\right)
\left(  \xi\right)  =e^{\frac{2\pi i}{n}\xi}\left(  \mathcal{F}v\right)
\left(  \xi\right)  .$
\end{lemma}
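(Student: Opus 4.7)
The plan is to verify both identities by direct computation from the definition of the discrete Fourier transform, using an appropriate change of summation variable in each case. Both statements are essentially the familiar modulation/translation interchange properties of $\mathcal{F}$, specialized to the cyclic action $A$ and the involution $B$.

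For the second identity, I would start with
\[
(\mathcal{F}Av)(\xi) = \frac{1}{n^{1/2}} \sum_{k \in \mathbb{Z}_n} v((k-1) \bmod n)\, \exp\!\left(\frac{2\pi i k \xi}{n}\right),
\]
and substitute $j = (k-1) \bmod n$, so that $k = (j+1) \bmod n$ and the sum is re-indexed over $\mathbb{Z}_n$. Factoring out $\exp(2\pi i \xi / n)$ from the resulting exponential $\exp(2\pi i (j+1)\xi/n)$ leaves precisely $(\mathcal{F}v)(\xi)$ inside, yielding the desired modulation relation.

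For the first identity, the analogous computation begins with
\[
(\mathcal{F}Bv)(\xi) = \frac{1}{n^{1/2}} \sum_{k \in \mathbb{Z}_n} v((n-k) \bmod n)\, \exp\!\left(\frac{2\pi i k \xi}{n}\right),
\]
and I would substitute $j = (n-k) \bmod n$, so $k = (n-j) \bmod n$. The exponential becomes $\exp(2\pi i (n-j)\xi/n) = \exp(2\pi i \xi)\exp(-2\pi i j \xi / n)$, and since $\exp(2\pi i \xi) = 1$ for $\xi \in \mathbb{Z}_n$, this simplifies to $\exp(2\pi i j(n-\xi)/n)$. The sum then matches $(\mathcal{F}v)((n-\xi) \bmod n)$ on the nose.

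There is no real obstacle here: the only subtlety is to make sure the change of variable is a bijection of $\mathbb{Z}_n$ (which it is, since both $k \mapsto k-1$ and $k \mapsto n-k$ are bijections of $\mathbb{Z}_n$) and to note that $\exp(2\pi i \xi) = 1$ so that the reindexing in the $B$-case does not produce a spurious phase. Both computations are two or three lines once the substitutions are set up.
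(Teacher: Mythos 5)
Your proposal is correct and follows essentially the same route as the paper: both identities are verified by the same change of variables ($m=(n-k)\bmod n$ for $B$ and $m=(k-1)\bmod n$ for $A$) in the defining sum of $\mathcal{F}$, with the same observation that $\exp(2\pi i\xi)=1$ absorbs the reindexing phase in the $B$-case. No substantive difference from the paper's argument.
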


\begin{proof}
The proof of this lemma follows from some formal calculations. Firstly,
\begin{align*}
\left(  \mathcal{F}Bv\right)  \left(  \xi\right)   &  =\frac{1}{n^{1/2}}%
\sum_{k\in\mathbb{Z}_{n}}Bv\left(  k\right)  \exp\left(  \frac{2\pi ik\xi}%
{n}\right) \\
&  =\frac{1}{n^{1/2}}\sum_{k\in\mathbb{Z}_{n}}v\left(  \left(  n-k\right)
\operatorname{mod}n\right)  \exp\left(  \frac{2\pi ik\xi}{n}\right) \\
&  =\frac{1}{n^{1/2}}\sum_{k\in\mathbb{Z}_{n}}v\left(  m\right)  \exp\left(
\frac{2\pi im\left(  \left(  n-\xi\right)  \operatorname{mod}n\right)  }%
{n}\right) \\
&  =\left(  \mathcal{F}v\right)  \left(  \left(  n-\xi\right)
\operatorname{mod}n\right)  .
\end{align*}
Secondly,%
\begin{align*}
\mathcal{F}\left(  Av\right)  \left(  \xi\right)   &  =\frac{1}{n^{1/2}}%
\sum_{k\in\mathbb{Z}_{n}}Av\left(  k\right)  \exp\left(  \frac{2\pi ik\xi}%
{n}\right) \\
&  =\frac{1}{n^{1/2}}\sum_{m\in\mathbb{Z}_{n}}v\left(  m\right)  \exp\left(
\frac{2\pi i\left(  m+1\right)  \xi}{n}\right) \\
&  =\frac{1}{n^{1/2}}e^{\frac{2\pi i}{n}\xi}\mathcal{F}v\left(  \xi\right)  .
\end{align*}
This completes the proof.
\end{proof}

From Lemma \ref{FAB}, we obtain the following. Let $\mathbf{F}$ be the matrix
representation of the Fourier transform and define
\[
\mathbf{A}=\mathbf{F}A\mathbf{F}^{-1}\text{ and }\mathbf{B}=\mathbf{F}%
B\mathbf{F}^{-1}=B.
\]
Then
\[
\mathbf{A}=\left(
\begin{array}
[c]{ccccc}%
1 &  &  &  & \\
& e^{\frac{2\pi i}{n}} &  &  & \\
&  & e^{\frac{4\pi i}{n}} &  & \\
&  &  & \ddots & \\
&  &  &  & e^{\frac{2\pi i\left(  n-1\right)  }{n}}%
\end{array}
\right)  \text{ and }\mathbf{B=}\left(
\begin{array}
[c]{ccccc}%
1 & 0 & \cdots & 0 & 0\\
0 & 0 & \cdots & 0 & 1\\
\vdots & \vdots & \udots & \udots & 0\\
0 & 0 & 1 & \udots & \vdots\\
0 & 1 & 0 & \cdots & 0
\end{array}
\right)  .
\]

\subsubsection{Laplace's Expansion Theorem}

The following discussion is mainly taken from Chapter $3,$ \cite{Howard}. Let
$X$ be a square matrix of order $n.$

\begin{definition}
A minor of $X$ is the determinant of any square sub-matrix $Y$ of $X.$ Let
$\left\vert Y\right\vert $ be an $m$-rowed minor of $X.$ The determinant of
the sub-matrix obtained by deleting from $X$ the rows and columns represented
in $Y$ is called the complement of $\left\vert Y\right\vert .$ Let $\left\vert
Y\right\vert $ be the $m$-rowed minor of $X$ in which rows $i_{1},\cdots
,i_{m}$ and columns $j_{1},\cdots,j_{m}$ are represented. Then the algebraic
complement, or cofactor of $\left\vert Y\right\vert $ is given by
\[
\left(  -1\right)  ^{\sum_{k=1}^{m}i_{k}+\sum_{k=1}^{m}j_{k}}\left\vert
Z\right\vert
\]
where $\left\vert Z\right\vert $ is the complent of $\left\vert Y\right\vert
.$
\end{definition}

According to \textbf{Laplace's Expansion Theorem} (see $3.7.3,$ \cite{Howard})
a formula for the determinant of $X$ can be obtained by following three main steps.

\begin{enumerate}
\item Select any $m$ rows (or columns) from the matrix $X.$

\item Collect all $m$-rowed minors of $X$ found in these $m$ rows (or columns).

\item The determinant of $X$ is equal to the sum of the products of each of
these minors and its algebraic complement.
\end{enumerate}

To be more precise, let $X=\left(  X_{i,j}\right)  _{1\leq i,j\leq n}$ be a
square matrix of order $n.$ Let $T\left(  n,p\right)  $ be the set of all
$p$-tuples of integers: $s=\left(  s_{1},\cdots,s_{p}\right)  $ where $1\leq
s_{1}<s_{2}<\cdots<s_{p}\leq n.$ Given any $s,t\in T\left(  n,p\right)  ,$ we
let $X\left(  s,t\right)  $ be the sub-matrix of order $p$ of $A$ such that
\[
X\left(  s,t\right)  _{i,j}=X_{s_{i},t_{j}}.
\]
Next, let $X\left(  s,t\right)  ^{c}$ be the complementary matrix of $X\left(
s,t\right)  $ which is a matrix of order $n-p$ obtained by removing rows
$s_{1},\cdots,s_{p}$ and columns $t_{1},\cdots,t_{p}$ from the matrix $A.$
Define
\[
\left\vert s\right\vert =\sum_{k=1}^{p}s_{k}.
\]
According to Laplace's Expansion Theorem, for any fixed
$t\in T\left(  n,p\right)  ,$
\begin{equation}
\det\left(  X\right)  =\sum_{s\in T\left(  n,p\right)  }\left(  -1\right)
^{\left\vert s\right\vert +\left\vert t\right\vert }\det\left(  X\left(
s,t\right)  \right)  \det\left(  X\left(  s,t\right)  ^{c}\right)  .
\label{Laplace}%
\end{equation}

\section{Proof of Main Results}

\begin{proposition}
\label{even}Assume that $n>2$ and is even. Given any $\gamma\in\Gamma,$ the
following holds true:%
\[
\sum_{k=0}^{\frac{n-2}{2}}\gamma A^{2k}=\sum_{k=0}^{\frac{n-2}{2}}\gamma
A^{2k}B.
\]
In other words, there exists a subset $\left\{  \gamma_{k_{1}},\cdots
,\gamma_{k_{n}}\right\}  $ of $\Gamma$ of cardinality $n$ which is linearly
dependent over $\mathbb{C}.$
\end{proposition}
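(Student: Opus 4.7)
The plan is to first reduce the identity to the case $\gamma = e$. Since $\gamma$ appears as a left factor on both sides, factoring it out reduces the statement to the single operator identity
\[
\sum_{k=0}^{(n-2)/2} A^{2k} \;=\; \sum_{k=0}^{(n-2)/2} A^{2k} B.
\]
I would then verify this by applying both sides to an arbitrary vector $v \in \mathbb{C}^{n}$ and comparing $j$-th coordinates, using the explicit formulas $(A^{m}v)(j) = v((j - m) \bmod n)$ and $(Bv)(j) = v((n - j) \bmod n)$ recorded just before the statement.

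After the substitutions $e = -2k \bmod n$ on the left and $e = 2k$ on the right, each of which permutes the index set $\{0,1,\ldots,(n-2)/2\}$, the identity reduces to
\[
\sum_{e \in E} v((j + e) \bmod n) \;=\; \sum_{e \in E} v((e - j) \bmod n), \qquad E = \{0, 2, 4, \ldots, n-2\}.
\]
The crux is the observation that, because $n$ is even, $E$ is a subgroup of $\mathbb{Z}_{n}$ of index two. Consequently the cosets $j + E$ and $-j + E$ coincide for every $j \in \mathbb{Z}_{n}$: both equal $E$ when $j$ is even and both equal the odd residues $\mathbb{Z}_{n} \setminus E$ when $j$ is odd (using $-j \equiv j \pmod{2}$, which holds precisely because $n$ is even). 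The multisets of arguments of $v$ on the two sides therefore agree, and the operator identity follows.

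For the linear dependence assertion, I would note that the $n$ elements
\[
\{A^{2k} : 0 \le k \le (n-2)/2\} \cup \{A^{2k}B : 0 \le k \le (n-2)/2\}
\]
of $\Gamma$ are pairwise distinct: each of the two families is internally distinct because $A$ has order $n$, and no element of the first family equals one of the second because $B$ is not a power of $A$ in $D_{2n}$. Rearranging the identity then exhibits a nontrivial $\pm 1$ linear combination of these $n$ distinct group elements equal to zero. The step I expect to be most delicate is the coset/parity argument, which genuinely breaks when $n$ is odd because $E$ then exhausts all of $\mathbb{Z}_{n}$; this is reassuring, as it matches the necessity of the even hypothesis.
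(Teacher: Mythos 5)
Your proof is correct, but it takes a genuinely different route from the paper's. The paper conjugates by the discrete Fourier transform, replacing $A$ by the diagonal matrix $\mathbf{A}=\mathbf{F}A\mathbf{F}^{-1}$ with entries $\omega^{j}$, evaluates the geometric sums $\sum_{k=0}^{(n-2)/2}\omega^{2jk}$ to show that $\sum_{k}\mathbf{A}^{2k}$ is a diagonal matrix supported only at the positions $j=0$ and $j=n/2$, and then observes that right multiplication by $\mathbf{B}$ permutes columns by $j\mapsto (n-j)\bmod n$, which fixes exactly those two positions; the identity is then transported back through $\mathbf{F}^{-1}$. You instead stay in the standard basis and compare coordinates directly: the left side sums $v$ over the coset $j+E$ and the right side over $-j+E$, where $E=\{0,2,\dots,n-2\}$, and these cosets coincide because $E$ is the index-two subgroup of even residues and $2j\in E$ for every $j$. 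The two arguments are Fourier duals of one another --- the paper's two surviving diagonal entries at $0$ and $n/2$ are precisely the characters trivial on your subgroup $E$ --- but yours is more elementary (no roots-of-unity sums, no conjugation back and forth) and makes the role of the parity of $n$ transparent, while the paper's version has the advantage of working with the matrices $\mathbf{A},\mathbf{B}$ that it reuses throughout the later proofs. Your handling of the linear-dependence conclusion (checking that the $n$ group elements $A^{2k}$ and $A^{2k}B$ are pairwise distinct because $A$ has order $n$ and $B\notin\langle A\rangle$) is also fine and is in fact slightly more careful than the paper, which leaves that distinctness implicit. One small wording quibble: the substitutions $e=-2k\bmod n$ and $e=2k$ are bijections from the index set onto $E$, not permutations of the index set itself, though your intended meaning is clear and the step is valid.
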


\begin{proof}
Let $I=\left\{  0,2,\cdots,n-2\right\}  .$ Put $\omega=e^{\frac{2\pi i}{n}}.$
Then
\[
\sum_{k\in I}\mathbf{A}^{k}=\sum_{k=0}^{\frac{n-2}{2}}\mathbf{A}^{2k}=\left(
\begin{array}
[c]{cccc}%
\sum_{k=0}^{\frac{n-2}{2}}1 &  &  & \\
& \sum_{k=0}^{\frac{n-2}{2}}\omega^{2k} &  & \\
&  & \ddots & \\
&  &  & \sum_{k=0}^{\frac{n-2}{2}}\omega^{2\left(  n-1\right)  k}%
\end{array}
\right)  .
\]
Now, we claim that $\sum_{k\in I}\mathbf{A}^{k}$ is a diagonal matrix with
only two nonzero entry. To see that this holds, it suffices to observe that
$\left(  \sum_{k=0}^{\frac{n-2}{2}}\mathbf{A}^{2k}\right)  _{1,1}=\frac{n}{2}$
and for $j\neq1,$%
\[
\left(  \sum_{k=0}^{\frac{n-2}{2}}\mathbf{A}^{2k}\right)  _{j,j}=\sum
_{k=0}^{\frac{n-2}{2}}\omega^{2jk}=\left\{
\begin{array}
[c]{ccc}%
0 & \text{if} & j\neq\frac{n}{2}\\
\frac{n}{2} & \text{if} & j=\frac{n}{2}%
\end{array}
\right.  .
\]
Moreover, for $j\neq l,$\ $\left(  \sum_{k=0}^{\frac{n-2}{2}}\mathbf{A}%
^{2k}\right)  _{j,l}=0.$ Next
\begin{align}
\sum_{k\in I}\mathbf{A}^{k}\mathbf{B}  &  \mathbf{=}\left(
\begin{array}
[c]{ccccc}%
\frac{n}{2} &  &  &  & \\
& \sum_{k=0}^{\frac{n-2}{2}}e^{\frac{2\pi i\left(  2k\right)  }{n}} & 0 &
\cdots & 0\\
& 0 & \sum_{k=0}^{\frac{n-2}{2}}e^{\frac{2\pi i\left(  2k\right)  2}{n}} &  &
\vdots\\
& \vdots &  & \ddots & 0\\
& 0 & \cdots & 0 & \sum_{k=0}^{\frac{n-2}{2}}e^{\frac{2\pi i\left(  2k\right)
\left(  n-1\right)  }{n}}%
\end{array}
\right)  \left(
\begin{array}
[c]{ccccc}%
1 &  &  &  & \\
& 0 & 0 & \cdots & 1\\
& \vdots &  & \udots & 0\\
& 0 & 1 &  & \vdots\\
& 1 & 0 & \cdots & 0
\end{array}
\right) \label{line 1}\\
&  =\left(
\begin{array}
[c]{ccccc}%
\frac{n}{2} &  &  &  & \\
& 0 & 0 & \cdots & \sum_{k=0}^{\frac{n-2}{2}}e^{\frac{2\pi i\left(  2k\right)
}{n}}\\
& \vdots &  & \udots & 0\\
& 0 & \sum_{k=0}^{\frac{n-2}{2}}e^{\frac{2\pi i\left(  2k\right)  \left(
n-2\right)  }{n}} &  & \vdots\\
& \sum_{k=0}^{\frac{n-2}{2}}e^{\frac{2\pi i\left(  2k\right)  \left(
n-1\right)  }{n}} & 0 & \cdots & 0
\end{array}
\right)  . \label{line 2}%
\end{align}
From (\ref{line 1}), and (\ref{line 2}), it is easy to see that the entry
\[
\sum_{k=0}^{\frac{n-2}{2}}e^{\frac{2\pi i\left(  2k\right)  \left(
n-j\right)  }{n}}\text{ for }1\leq j\leq n-1
\]
is the only possible nonzero element of $\operatorname{col}_{j+1}\left(  \sum_{k\in
I}\mathbf{A}^{k}\mathbf{B}\right)  .$ Thus, for any index $j,$ ($0\leq j\leq
n-1$) the complex number $\sum_{k=0}^{\frac{n-2}{2}}e^{\frac{2\pi i\left(
2k\right)  \left(  n-j\right)  }{n}}$ is a diagonal entry of the matrix
$\sum_{k\in I}\mathbf{A}^{k}\mathbf{B}$ if and only if $j=\frac{n}{2},$ or
$j=0.$ Therefore, $\sum_{k=0}^{\frac{n-2}{2}}\mathbf{A}^{2k}=\sum_{k=0}%
^{\frac{n-2}{2}}\mathbf{A}^{2k}\mathbf{B}$ and this implies that $\sum
_{k=0}^{\frac{n-2}{2}}\mathbf{F}^{-1}\mathbf{A}^{2k}=\sum_{k=0}^{\frac{n-2}%
{2}}\mathbf{F}^{-1}\mathbf{A}^{2k}\mathbf{B.}$ Since $\mathbf{F}%
^{-1}\mathbf{A}=A\mathbf{F}^{-1}$ and $\mathbf{F}^{-1}\mathbf{B}%
=B\mathbf{F}^{-1}$ then
\begin{equation}
\sum_{k=0}^{\frac{n-2}{2}}A^{2k}=\sum_{k=0}^{\frac{n-2}{2}}A^{2k}B.
\label{abov}%
\end{equation}
Finally, given any $\gamma\in\Gamma,$ by multiplying (\ref{abov}) on the left
by $\gamma$ we obtain the desired result.
\end{proof}

\begin{corollary}
\label{even case}If $n$ is even then it is not possible to find a vector
$v\in\mathbb{C}^{n}$ such that $\Gamma v$ has the Haar property.
\end{corollary}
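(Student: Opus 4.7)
The plan is to read off the corollary as a direct consequence of Proposition \ref{even}. First, I would instantiate that proposition with $\gamma$ equal to the identity of $\Gamma$, producing the identity
\[
\sum_{k=0}^{\frac{n-2}{2}} A^{2k} \;-\; \sum_{k=0}^{\frac{n-2}{2}} A^{2k}B \;=\; 0
\]
in the complex group algebra $\mathbb{C}[\Gamma]$. This is a nontrivial $\mathbb{C}$-linear relation among exactly $n$ elements of $\Gamma$, namely the $n/2$ rotations $A^{2k}$ for $0\le k\le \frac{n-2}{2}$ together with the $n/2$ reflections $A^{2k}B$ for the same range of $k$. These $n$ group elements are pairwise distinct in $D_{2n}$: the two blocks are disjoint because rotations and reflections are disjoint, and inside each block the exponents $2k$ are pairwise distinct modulo $n$.

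Next, I would apply the identity above to an arbitrary vector $v\in\mathbb{C}^{n}$, obtaining the vector identity
\[
\sum_{k=0}^{\frac{n-2}{2}} A^{2k}v \;=\; \sum_{k=0}^{\frac{n-2}{2}} A^{2k}Bv.
\]
This exhibits a nontrivial linear dependence (with coefficients $\pm 1$) among the $n$ vectors
\[
\left\{A^{2k}v,\, A^{2k}Bv \; : \; 0\le k\le \tfrac{n-2}{2}\right\} \;\subset\; \Gamma v.
\]
In particular, this distinguished subset of $\Gamma v$, of cardinality at most $n$, is linearly dependent, hence cannot be a basis for $\mathbb{C}^{n}$. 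Since $v$ was arbitrary, no choice of $v\in\mathbb{C}^{n}$ can make $\Gamma v$ satisfy the Haar property.

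There is essentially no obstacle: the entire content of the corollary has been packaged into Proposition \ref{even}, and the only bookkeeping step is to confirm that the $n$ group elements producing the dependency are genuinely distinct in $\Gamma$, which follows from the standard presentation of the dihedral group. If one wished to be cautious about the possibility that two of these group elements happened to send $v$ to the same vector, the argument only gets stronger, since repetition further reduces the rank of the resulting subset of $\Gamma v$.
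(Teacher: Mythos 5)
Your proposal is correct and follows essentially the same route as the paper: specialize Proposition \ref{even} to $\gamma=e$, apply the resulting operator identity to an arbitrary $v$, and observe that the $n$ vectors $A^{2k}v$ and $A^{2k}Bv$ for $0\le k\le \frac{n-2}{2}$ form a linearly dependent subset of $\Gamma v$ of cardinality $n$, defeating the Haar property. Your extra check that these $n$ group elements are pairwise distinct is a reasonable piece of bookkeeping the paper leaves implicit.
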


\begin{proof}
According to Proposition \ref{even}, any vector $v$ is in the kernel of the
linear operator $\sum_{k=0}^{\frac{n-2}{2}}A^{2k}-\sum_{k=0}^{\frac{n-2}{2}%
}A^{2k}B.$ Thus, for any fixed vector $v\in\mathbb{C}^{n},$ the set of
vectors
\[
\left\{  A^{2k}v:0\leq k\leq\frac{n-2}{2}\right\}  \cup\left\{  A^{2k}Bv:0\leq
k\leq\frac{n-2}{2}\right\}
\]
is linearly dependent.
\end{proof}

\begin{lemma}
\label{impossible}Assume that $n$ is an odd natural number greater than
one.\ Let $m\in\mathbb{N}$ such that $1\leq m<n.$ Let $\mathbb{Z}_{n}=\left\{
0,1,\cdots,m-1\right\}  \cup\left\{  m,\cdots,n-1\right\}  $. Let
$B_{1}\subseteq\left\{  0,1,\cdots,m-1\right\}  ,B_{2}\subseteq\left\{
m,\cdots,n-1\right\}  $ such that $\mathrm{card}\left(  B_{1}\right)
=\mathrm{card}\left(  B_{2}\right)  \geq1.$ Then it is not possible for
$B_{1}^{-1}=B_{1}$ and $B_{2}^{-1}=B_{2}.$
\end{lemma}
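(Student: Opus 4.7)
My plan is to analyze how the involution $k \mapsto (n-k)\bmod n$ acts on the two halves of the partition of $\mathbb{Z}_n$, and to derive contradictory size constraints on $m$ from the self-inverse conditions on $B_1$ and $B_2$. The key observation is that the inversion map fixes $0$, and restricts to a bijection between $\{1,\dots,m-1\}$ and $\{n-m+1,\dots,n-1\}$, and likewise between $\{m,\dots,n-1\}\setminus\{0\}$ and $\{1,\dots,n-m\}$. Since $n$ is odd, no nonzero element of $\mathbb{Z}_n$ is its own inverse.

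I will split into cases according to whether $0\in B_1$. First, suppose $0\notin B_1$, so that $B_1\subseteq\{1,\dots,m-1\}$. The condition $B_1^{-1}=B_1$ means every $k\in B_1$ has $n-k\in B_1$, hence $n-k\le m-1$, i.e.\ $k\ge n-m+1$. Therefore $B_1\subseteq\{n-m+1,\dots,m-1\}$, and nonemptiness forces $n-m+1\le m-1$, i.e.\ $2m\ge n+2$. If $0\in B_1$ but $B_1\ne\{0\}$, the same argument applied to $B_1\setminus\{0\}$ (which is still self-inverse) gives the same inequality $2m\ge n+2$.

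For $B_2$, note $0\notin B_2$ since $B_2\subseteq\{m,\dots,n-1\}$ with $m\ge 1$. The condition $B_2^{-1}=B_2$ forces, for each $k\in B_2$, $n-k\in B_2\subseteq\{m,\dots,n-1\}$, so $n-k\ge m$, i.e.\ $k\le n-m$. Thus $B_2\subseteq\{m,\dots,n-m\}$, and nonemptiness forces $m\le n-m$, i.e.\ $2m\le n$. Combining with the bound $2m\ge n+2$ above yields an immediate contradiction, dispatching the main case.

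The only remaining case is $B_1=\{0\}$, which gives $|B_1|=|B_2|=1$; then $B_2=\{j\}$ with $j\in\{m,\dots,n-1\}$ and $B_2^{-1}=B_2$ forces $n-j\equiv j\pmod n$, i.e.\ $2j\equiv 0\pmod n$, which is impossible because $n$ is odd and $j\ne 0$. The main subtlety (and the only real obstacle) is handling the fixed point $0$ of the inversion map correctly: this is why the case $B_1=\{0\}$ must be treated separately, and why the parity hypothesis on $n$ is essential — it prevents $n/2$ from being an additional fixed point that would spoil the cardinality counting in both halves.
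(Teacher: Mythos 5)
Your proof is correct and follows essentially the same route as the paper's: both arguments rest on the facts that $0$ is the only self-inverse element of $\mathbb{Z}_n$ when $n$ is odd, that a nonzero self-inverse pair inside $\{1,\dots,m-1\}$ forces $n\le 2(m-1)$, and that one inside $\{m,\dots,n-1\}$ forces $n\ge 2m$. The paper organizes this by splitting on $m<n/2$ versus $m>n/2$ (drawing the contradiction from $B_1$ or $B_2$ accordingly) and treating the singleton case separately, whereas you derive both inequalities simultaneously and isolate only the degenerate case $B_1=\{0\}$; the mathematical content is the same.
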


\begin{proof}
We shall prove this lemma by cases. For the first case, let us suppose that
$\mathrm{card}\left(  B_{1}\right)  =\mathrm{card}\left(  B_{2}\right)  =1.$
Since $B_{1}$ and $B_{2}$ are disjoint, then either $B_{1}$ contains a
non-trivial element or $B_{2}$ contains a non-trivial element. In either case,
it is not possible for $B_{1}^{-1}=B_{1}$ and $B_{2}^{-1}=B_{2}.$ This is due
to the fact that when $n$ is odd, the only element which is equal to its
additive inverse ($\operatorname{mod}n$) is the trivial element $0.$ For the
second case, let us suppose that $m<\frac{n}{2}$ and $\mathrm{card}\left(
B_{1}\right)  =\mathrm{card}\left(  B_{2}\right)  >1.$ Then, there is at least
one non-trivial element of $\mathbb{Z}_{n}$ in the set $B_{1}.$ If $B_{1}%
^{-1}=B_{1}$ then there exist $k,k^{\prime}\in B_{1}$ such that $k=n-k^{\prime
}.$ Now, since $k,k^{\prime}\leq m-1$ then $n=k+k^{\prime}\leq2\left(
m-1\right)  <n-2$ and this is absurd. For the third case, let us suppose that
$m>\frac{n}{2}$ and $\mathrm{card}\left(  B_{1}\right)  =\mathrm{card}\left(
B_{2}\right)  >1.$ If $B_{2}^{-1}=B_{2}$ then there must exist $k,k^{\prime
}\in B_{2}$ such that $k+k^{\prime}=n,$ and $k,k^{\prime}\geq m.$ Thus,
$n=k+k^{\prime}\geq2m>n$ and this is absurd as well.
\end{proof}

\begin{example}
Let $\mathbb{Z}_{7}=\left\{  0,\cdots,6\right\}  .$ Put $m=3.$ Now let
$B_{1}=\left\{  0,1\right\}  \text{ and }B_{2}=\left\{  3,4\right\}  .$ Then
$B_{2}^{-1}=B_{2}.$ However, $B_{1}^{-1}=\left\{  0,6\right\}  \neq B_{1}.$
\end{example}

\begin{remark}
We remark here that Lemma \ref{impossible} fails when $n$ is even. For
example, let us consider the finite cyclic group of order four. Let $m=2,$
$B_{1}=\left\{  0\right\}  $ and $B_{2}=\left\{  2\right\}  .$ Then clearly,
$B_{1}^{-1}=B_{1}$ and $B_{2}^{-1}=B_{2}.$
\end{remark}

Define the group%
\[
\Sigma=\mathbf{F}\Gamma\mathbf{F}^{-1}=\left\{  \mathbf{F}\gamma
\mathbf{F}^{-1}:\gamma\in\Gamma\right\}
\]
which is also isomorphic to the Dihedral group. We recall that for any vector
$v\in%
\mathbb{C}
^{n},$ we write
\[
v=\left(
\begin{array}
[c]{cccc}%
v_{0} & v_{1} & \cdots & v_{n-1}%
\end{array}
\right)  ^{T}.
\]
For any subset $\Lambda=\left\{  \gamma_{k_{1}},\cdots,\gamma_{k_{n}}\right\}
$ of the group $\Sigma$, we consider the corresponding matrix-valued function
defined on $\mathbf{\mathbb{C}}^{n}$ as follows.
\[
\delta_{\Lambda}:f\mapsto\left(
\begin{array}
[c]{c}%
\gamma_{k_{1}}f\\
\vdots\\
\gamma_{k_{n}}f
\end{array}
\right)  =\left(
\begin{array}
[c]{ccc}%
\left(  \gamma_{k_{1}}f\right)  _{0} & \cdots & \left(  \gamma_{k_{1}%
}f\right)  _{n-1}\\
\vdots & \ddots & \vdots\\
\left(  \gamma_{k_{n}}f\right)  _{0} & \cdots & \left(  \gamma_{k_{n}%
}f\right)  _{n-1}%
\end{array}
\right)  .
\]
We acknowledge that the proof of the following proposition was partly inspired
by the proof given for Theorem $4$ \cite{Pfander}.

\begin{proposition}
\label{prime case}Let $\Lambda$ be any subset of $\Sigma$ of cardinality $n.$
If $n$ is prime then there exists a Zariski open set $E\subset\mathbb{C}^{n}$
such that given any vector $f\in E,$ $\det\delta_{\Lambda}\left(  f\right)  $
is a non-vanishing homogeneous polynomial.
\end{proposition}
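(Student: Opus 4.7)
The plan is to apply Laplace's expansion formula \eqref{Laplace} to $\delta_{\Lambda}(f)$ after sorting its rows by type, and then to isolate a single monomial in $f$ whose coefficient is provably nonzero. First partition $\Lambda$ into $\Lambda_1=\{\mathbf{A}^{k}:k\in K_{1}\}$ and $\Lambda_2=\{\mathbf{A}^{k}\mathbf{B}:k\in K_{2}\}$, with $m_i=|K_i|$ and $m_1+m_2=n$. Using the diagonal form of $\mathbf{A}$ and the flip form of $\mathbf{B}$, a type-A row of $\delta_\Lambda(f)$ reads $(\omega^{jk}f_{j})_{j=0}^{n-1}$ and a type-B row reads $(\omega^{jk}f_{\sigma(j)})_{j=0}^{n-1}$, where $\omega=e^{2\pi i/n}$ and $\sigma(j)=(n-j)\bmod n$. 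Since every entry is linear in $f$, the map $f\mapsto\det\delta_{\Lambda}(f)$ is automatically a homogeneous polynomial of degree $n$; one only has to show it is not identically zero. The boundary cases $m_1\in\{0,n\}$ are immediate: the matrix factors as a Fourier-type matrix (invertible by Lemma \ref{minor cyclic}) times a diagonal matrix, so its determinant is a nonzero multiple of $\prod_{j}f_{j}$.

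Assuming $1\le m_{1}\le n-1$, expanding along the $m_{1}$ type-A rows via \eqref{Laplace} gives
\[
\det\delta_\Lambda(f)\;=\;\sum_{S}\pm\,V_A(S)\,V_B(S^c)\,\prod_{j\in S}f_{j}\prod_{j\in S^{c}}f_{\sigma(j)},
\]
where $S$ ranges over $m_{1}$-subsets of $\mathbb{Z}_{n}$ and $V_A(S)=\det(\omega^{jk})_{k\in K_{1},j\in S}$, $V_B(S^c)=\det(\omega^{jk})_{k\in K_{2},j\in S^{c}}$ are Fourier minors; by Lemma \ref{minor cyclic}, and here is where primality of $n$ is used the first time, each of these minors is nonzero. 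Writing $M(S)$ for the monomial attached to $S$, a direct computation of exponents shows that $M(S)=M(S')$ if and only if $\mathbf{1}_{S}-\mathbf{1}_{\sigma(S)}=\mathbf{1}_{S'}-\mathbf{1}_{\sigma(S')}$, equivalently, if and only if both $S\setminus S'$ and $S'\setminus S$ are $\sigma$-invariant subsets of $\mathbb{Z}_{n}$.

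Now single out the distinguished choice $S_{0}=\{0,1,\dots,m_{1}-1\}$ and study the monomial $M(S_{0})$. If some $S\neq S_{0}$ of size $m_{1}$ produced the same monomial, then $B_{1}:=S_{0}\setminus S\subseteq\{0,\dots,m_{1}-1\}$ and $B_{2}:=S\setminus S_{0}\subseteq\{m_{1},\dots,n-1\}$ would be nonempty subsets with $|B_{1}|=|B_{2}|$ satisfying $B_{1}^{-1}=B_{1}$ and $B_{2}^{-1}=B_{2}$, which directly contradicts Lemma \ref{impossible} (applicable since a prime $n>2$ is odd). Hence $S_{0}$ is the unique contributor to $M(S_{0})$; its coefficient $\pm V_A(S_{0})\,V_B(S_{0}^{c})$ is nonzero, so $\det\delta_\Lambda(f)$ is a nonzero homogeneous polynomial, and the set $E$ of its non-zeros is the desired Zariski open (dense) subset of $\mathbb{C}^{n}$.

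The main obstacle is precisely the uniqueness step. A priori many different $S$ collapse to the same monomial, because the symmetric part of $S$ under $j\mapsto n-j$ is invisible to the exponent pattern of $M(S)$; choosing $S_{0}$ at the extremal end forces any competing asymmetric part to straddle the partition $\{0,\dots,m_1-1\}\,|\,\{m_1,\dots,n-1\}$, and Lemma \ref{impossible} is exactly tailored to rule this out. In particular, the hypothesis that $n$ is prime is used for two independent reasons: its oddness powers the uniqueness step through Lemma \ref{impossible}, while its primality powers the nonvanishing of the arbitrary Fourier minors $V_A(S)$ and $V_B(S^{c})$ through Lemma \ref{minor cyclic}.
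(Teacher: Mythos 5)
Your proposal is correct and follows essentially the same route as the paper's proof: Laplace expansion across the type-A/type-B split, Chebotarev's theorem (Lemma \ref{minor cyclic}) to make the coefficient of the distinguished monomial a nonzero product of Fourier minors, and Lemma \ref{impossible} to show the extremal subset $S_0=\{0,\dots,m_1-1\}$ is the unique contributor to that monomial. Your indicator-vector reformulation of the collision condition ($M(S)=M(S')$ iff $S\setminus S'$ and $S'\setminus S$ are both $\sigma$-invariant) is a slightly cleaner packaging of the same uniqueness step the paper carries out with the sets $\{j_1,\dots,j_{m_1}\}$ and $\{j_1^{\circ},\dots,j_{m_1}^{\circ}\}$.
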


\begin{proof}
Put $\omega=e^{\frac{2\pi i}{n}}.$ There are several cases to consider. For
the first case, let us suppose that there exist natural numbers $m,p$ such
that $m+p=n$, such that%
\[
\Lambda=\left\{  \mathbf{A}^{k_{1}},\cdots,\mathbf{A}^{k_{m}},\mathbf{A}%
^{\ell_{1}}\mathbf{B},\cdots,\mathbf{A}^{\ell_{p}}\mathbf{B}\right\}
\]
and
\begin{equation}
\delta_{\Lambda}\left(  f\right)  =\left(
\begin{array}
[c]{cccccccc}%
f_{0} & \omega^{k_{1}}f_{1} & \cdots & \omega^{\left(  m-1\right)  k_{1}%
}f_{m-1} & \omega^{mk_{1}}f_{m} & \cdots & \omega^{\left(  n-2\right)  k_{1}%
}f_{n-2} & \omega^{\left(  n-1\right)  k_{1}}f_{n-1}\\
f_{0} & \omega^{k_{2}}f_{1} & \cdots & \omega^{\left(  m-1\right)  k_{2}%
}f_{m-1} & \omega^{mk_{2}}f_{m} & \cdots & \omega^{\left(  n-2\right)  k_{2}%
}f_{n-2} & \omega^{\left(  n-1\right)  k_{2}}f_{n-1}\\
\vdots & \vdots &  & \vdots & \vdots &  & \vdots & \vdots\\
f_{0} & \omega^{k_{m}}f_{1} & \cdots & \omega^{\left(  m-1\right)  k_{m}%
}f_{m-1} & \omega^{mk_{m}}f_{m} & \cdots & \omega^{\left(  n-2\right)  k_{m}%
}f_{n-2} & \omega^{\left(  n-1\right)  k_{m}}f_{n-1}\\
f_{0} & \omega^{\ell_{1}}f_{n-1} & \cdots & \omega^{\left(  m-1\right)
\ell_{1}}f_{n-\left(  m-1\right)  } & \omega^{m\ell_{1}}f_{n-m} & \cdots &
\omega^{\left(  n-2\right)  \ell_{1}}f_{2} & \omega^{\left(  n-1\right)
\ell_{1}}f_{1}\\
f_{0} & \omega^{\ell_{2}}f_{n-1} & \cdots & \omega^{\left(  m-1\right)
\ell_{2}}f_{n-\left(  m-1\right)  } & \omega^{m\ell_{2}}f_{n-m} & \cdots &
\omega^{\left(  n-2\right)  \ell_{2}}f_{2} & \omega^{\left(  n-1\right)
\ell_{2}}f_{1}\\
\vdots & \vdots &  & \vdots & \vdots &  & \vdots & \vdots\\
f_{0} & \omega^{\ell_{p}}f_{n-1} & \cdots & \omega^{\left(  m-1\right)
\ell_{p}}f_{n-\left(  m-1\right)  } & \omega^{m\ell_{p}}f_{n-m} & \cdots &
\omega^{\left(  n-2\right)  \ell_{p}}f_{2} & \omega^{\left(  n-1\right)
\ell_{p}}f_{1}%
\end{array}
\right)  . \label{first case}%
\end{equation}
Now, fix $t=\left(  1,\cdots,m\right)  .$ We consider the transpose of
$\delta_{\Lambda}\left(  f\right)  $ which is given by
\[
\left(
\begin{array}
[c]{cccccc}%
f_{0} & \cdots & f_{0} & f_{0} & \cdots & f_{0}\\
\omega^{k_{1}}f_{1} & \cdots & \omega^{k_{m}}f_{1} & \omega^{\ell_{1}}f_{n-1}
& \cdots & \omega^{\ell_{p}}f_{n-1}\\
\vdots &  & \vdots & \vdots &  & \vdots\\
\omega^{\left(  m-1\right)  k_{1}}f_{m-1} & \cdots & \omega^{\left(
m-1\right)  k_{m}}f_{m-1} & \omega^{\left(  m-1\right)  \ell_{1}}f_{n-\left(
m-1\right)  } & \cdots & \omega^{\left(  m-1\right)  \ell_{p}}f_{n-\left(
m-1\right)  }\\
\omega^{mk_{1}}f_{m} & \cdots & \omega^{mk_{m}}f_{m} & \omega^{m\ell_{1}%
}f_{n-m} & \cdots & \omega^{m\ell_{p}}f_{n-m}\\
\vdots &  & \vdots & \vdots &  & \vdots\\
\omega^{\left(  n-2\right)  k_{1}}f_{n-2} & \cdots & \omega^{\left(
n-2\right)  k_{m}}f_{n-2} & \omega^{\left(  n-2\right)  \ell_{1}}f_{2} &
\cdots & \omega^{\left(  n-2\right)  \ell_{p}}f_{2}\\
\omega^{\left(  n-1\right)  k_{1}}f_{n-1} & \cdots & \omega^{\left(
n-1\right)  k_{m}}f_{n-1} & \omega^{\left(  n-1\right)  \ell_{1}}f_{1} &
\cdots & \omega^{\left(  n-1\right)  \ell_{p}}f_{1}%
\end{array}
\right)  .
\]
To avoid cluster of notation, put
\[
M_{f}=\left(  \delta_{\Lambda}\left(  f\right)  \right)  ^{T}.
\]
Applying Laplace's Expansion Theorem (\ref{Laplace}) to $M_{f},$ we obtain
\[
\det\left(  M_{f}\right)  =\sum_{s\in T\left(  n,m\right)  }\left(  -1\right)
^{\left\vert s\right\vert +\left\vert t\right\vert }\det\left(  M_{f}\left(
s,t\right)  \right)  \det\left(  \left(  M_{f}\left(  s,t\right)  \right)
^{c}\right)  .
\]
For $t=\left(  1,\cdots,m\right)  ,$ $M_{f}\left(  t,t\right)  $ is the matrix
obtained by retaining the first $m$ rows and first $m$ columns of the matrix
$M_{f}.$ The matrix $M_{f}\left(  t,t\right)  ^{c}$ is a matrix of order
$n-m=p$ which is obtained by deleting the first $m$ rows and the first $m$
columns of $M_{f}.$ Thus, for $t=\left(  1,\cdots,m\right)  ,$ it is easy to
see that $\left(  -1\right)  ^{2\left\vert t\right\vert }\det\left(
M_{f}\left(  t,t\right)  \right)  \det\left(  M_{f}\left(  t,t\right)
^{c}\right)  $ is equal to
\begin{equation}
p_{\Lambda}\left(  f\right)  =\left\vert
\begin{array}
[c]{ccc}%
f_{0} & \cdots & f_{0}\\
\omega^{k_{1}}f_{1} & \cdots & \omega^{k_{m}}f_{1}\\
\vdots &  & \vdots\\
\omega^{\left(  m-1\right)  k_{1}}f_{m-1} & \cdots & \omega^{\left(
m-1\right)  k_{m}}f_{m-1}%
\end{array}
\right\vert \left\vert
\begin{array}
[c]{ccc}%
\omega^{m\ell_{1}}f_{n-m} & \cdots & \omega^{m\ell_{p}}f_{n-m}\\
\vdots &  & \vdots\\
\omega^{\left(  n-2\right)  \ell_{1}}f_{2} & \cdots & \omega^{\left(
n-2\right)  \ell_{p}}f_{2}\\
\omega^{\left(  n-1\right)  \ell_{1}}f_{1} & \cdots & \omega^{\left(
n-1\right)  \ell_{p}}f_{1}%
\end{array}
\right\vert . \label{monomial}%
\end{equation}
Using the fact that the determinant map is multi-linear, then (\ref{monomial})
becomes%
\begin{equation}
p_{\Lambda}\left(  f\right)  =ar\left(  f\right)  \label{rf}%
\end{equation}
where
\[
a=\left\vert
\begin{array}
[c]{ccc}%
1 & \cdots & 1\\
\omega^{k_{1}} & \cdots & \omega^{k_{m}}\\
\vdots &  & \vdots\\
\omega^{\left(  m-1\right)  k_{1}} & \cdots & \omega^{\left(  m-1\right)
k_{m}}%
\end{array}
\right\vert \left\vert
\begin{array}
[c]{ccc}%
\omega^{m\ell_{1}} & \cdots & \omega^{m\ell_{p}}\\
\vdots &  & \vdots\\
\omega^{\left(  n-2\right)  \ell_{1}} & \cdots & \omega^{\left(  n-2\right)
\ell_{p}}\\
\omega^{\left(  n-1\right)  \ell_{1}} & \cdots & \omega^{\left(  n-1\right)
\ell_{p}}%
\end{array}
\right\vert \in%
\mathbb{C}
,
\]
and $r\left(  f\right)  $ is the monomial given by
\begin{equation}
r\left(  f\right)  ={\displaystyle\prod\limits_{k=0}^{m-1}}f_{k}%
{\displaystyle\prod\limits_{j=1}^{n-m}}f_{j}. \label{mono}%
\end{equation}
Using Chebotarev's theorem (see Lemma \ref{minor cyclic}), since $n$ is prime
and because $a$ is a product of minors of the discrete Fourier matrix, then
$a\neq0$ and the polynomial $p_{\Lambda}\left(  f\right)  $ is nonzero. Next,
we remark that $\det\left(  M_{f}\right)  $ is a homogeneous polynomial of
degree $n$ in the variables $f_{0},\cdots,f_{n-1}$ and can be uniquely written
as
\[
\det\left(  M_{f}\right)  =\sum_{\alpha\in\mathbb{Z}_{+}^{n},\left\vert
\alpha\right\vert =n}a_{\alpha}f_{0}^{\alpha_{0}}\cdots f_{n-1}^{\alpha_{n-1}%
},\text{ where }a_{\alpha}\in\mathbb{C}.
\]
Regarding the formula above, we remind the reader that the multi-index
$\alpha$ is equal to $\left(  \alpha_{0},\cdots,\alpha_{n-1}\right)  .$ To
show that the polynomial $\det\left(  M_{f}\right)  $ is nonzero, it suffices
to find a multi-index $\alpha$ such that $\left\vert \alpha\right\vert =n$ and
$a_{\alpha}\neq0.$ In order to prove this fact, we would like to isolate a
certain monomial of the type $f_{0}^{\alpha_{0}}\cdots f_{n-1}^{\alpha_{n-1}}$
in
\begin{equation}
\det\left(  M_{f}\right)  =\sum_{s\in T\left(  n,m\right)  }\left(  -1\right)
^{\left\vert s\right\vert +\left\vert t\right\vert }\det\left(  M_{f}\left(
s,t\right)  \right)  \det\left(  \left(  M_{f}\left(  s,t\right)  \right)
^{c}\right)  \label{determinant}%
\end{equation}
and prove that its corresponding coefficient $a_{\alpha}$ is non zero. The
monomial in question that we aim to isolate is $r\left(  f\right)  $ which is
defined in (\ref{mono}). We shall prove that the corresponding coefficient in
(\ref{determinant}) to $r\left(  f\right)  $ is just the complex number $a$
which is described in Formula (\ref{rf}). First, it is easy to see that
\[
r\left(  f\right)  ={\displaystyle\prod\limits_{k\in I\left(  s\right)  }%
}f_{k}{\displaystyle\prod\limits_{j\in I\left(  s\right)  ^{c}}}f_{j}%
\]
where
\[
I\left(  s\right)  =\left\{  0\leq k\leq m-1\right\}  \text{ and }I\left(
s\right)  ^{c}=\left\{  1\leq k\leq n-m\right\}  .
\]
Next for any $s^{\circ}\in T\left(  n,m\right)  $, let us suppose that
$s^{\circ}\neq t.$ We may write $s^{\circ}=\left(  s_{j_{1}}^{\circ}%
,\cdots,s_{j_{m}}^{\circ}\right)  \ $and
\[
\det\left(  M_{f}\left(  s^{\circ},t\right)  \right)  \det\left(  M_{f}\left(
s^{\circ},t\right)  ^{c}\right)  =a^{\circ}{\displaystyle\prod\limits_{k\in
I\left(  s^{\circ}\right)  }}f_{k}{\displaystyle\prod\limits_{I\left(
s^{\circ}\right)  ^{c}}}f_{k}%
\]
where $a^{\circ}\in\mathbb{C}$ and the sets $I\left(  s^{\circ}\right)  $ and
$I\left(  s^{\circ}\right)  ^{c}$ are described as follows. There exists a
natural number $m_{1}\leq m$ such that
\begin{equation}
I\left(  s^{\circ}\right)  =\left(  I\left(  s\right)  -\left\{  j_{1}%
,\cdots,j_{m_{1}}\right\}  \right)  \cup\left\{  j_{1}^{\circ},\cdots
,j_{m_{1}}^{\circ}\right\}  , \label{new1}%
\end{equation}
all the $j_{k}^{\circ}\ $are greater or equal to $m,$ all the $j_{k}\ $are
less or equal to $m-1,$ $\left\{  j_{1}^{\circ},\cdots,j_{m_{1}}^{\circ
}\right\}  \cap\left\{  j_{1},\cdots,j_{m_{1}}\right\}  $ is a null set and
\begin{equation}
I\left(  s^{\circ}\right)  ^{c}=\left(  I\left(  s\right)  ^{c}-\left\{
\overline{n-j_{1}^{\circ}},\cdots,\overline{n-j_{m_{1}}^{\circ}}\right\}
\right)  \cup\left\{  \overline{n-j_{1}},\cdots,\overline{n-j_{m_{1}}%
}\right\}  . \label{new 2}%
\end{equation}
Here $\overline{x}$ stands for $x\operatorname{mod}n.$ The set $\left\{
j_{1},\cdots,j_{m_{1}}\right\}  $ corresponds to the set of rows removed from
$M_{f}\left(  s,t\right)  $ and the set $\left\{  j_{1}^{\circ},\cdots
,j_{m_{1}}^{\circ}\right\}  $ corresponds to the new rows which are then added
to form a new sub-matrix $M_{f}\left(  s^{\circ},t\right)  .$ To prove that
the coefficient $a$ is the unique coefficient of the monomial $r(f),$ let us
assume by contradiction that there exists $s^{\circ}\neq\left(  1,\cdots
,m\right)  $ such that its corresponding monomial in (\ref{determinant}) is
\[
{\prod\limits_{k\in I\left(  s^{\circ}\right)  }}f_{k}{\prod\limits_{j\in
I\left(  s^{\circ}\right)  ^{c}}}f_{j}={\prod\limits_{k\in I\left(  s\right)
}}f_{k}{\prod\limits_{j\in I\left(  s\right)  ^{c}}}f_{j}.
\]
Appealing to (\ref{new1}) and (\ref{new 2}) we have
\begin{align*}
{\prod\limits_{k\in I\left(  s^{\circ}\right)  }}f_{k}{\prod\limits_{I\left(
s^{\circ}\right)  ^{c}}}f_{k}  &  =\left(  {\prod\limits_{k\in\left(  I\left(
s\right)  -\left\{  j_{1},\cdots,j_{m_{1}}\right\}  \right)  \cup\left\{
j_{1}^{\circ},\cdots,j_{m_{1}}^{\circ}\right\}  }}f_{k}\right)  \left(
{\prod\limits_{j\in\left(  I\left(  s\right)  ^{c}-\left\{  \overline
{n-j_{1}^{\circ}},\cdots,\overline{n-j_{m_{1}}^{\circ}}\right\}  \right)
\cup\left\{  \overline{n-j_{1}},\cdots,\overline{n-j_{m_{1}}}\right\}  }}%
f_{j}\right) \\
&  =\left(  f_{0}\cdots f_{m-1}\right)  \left(  f_{1}\cdots f_{n-m}\right)  .
\end{align*}
Since $\left\{  j_{1}^{\circ},\cdots,j_{m_{1}}^{\circ}\right\}  \cap\left\{
j_{1},\cdots,j_{m_{1}}\right\}  $ is an empty set then it must be the case
that
\[
f_{j_{1}^{\circ}}\cdots f_{j_{m_{1}}^{\circ}}=f_{\overline{n-j_{1}^{\circ}}%
}\cdots f_{\overline{n-j_{m_{1}}^{\circ}}}\text{ and }f_{\overline{n-j_{1}}%
}\cdots f_{\overline{n-j_{m_{1}}}}=f_{j_{1}}\cdots f_{j_{m_{1}}}.
\]
As a result,
\begin{equation}
\left\{  j_{1},\cdots,j_{m_{1}}\right\}  =\left\{  \overline{n-j_{1}}%
,\cdots,\overline{n-j_{m_{1}}}\right\}  ,\text{ }\left\{  j_{1}^{\circ}%
,\cdots,j_{m_{1}}^{\circ}\right\}  =\left\{  \overline{n-j_{1}^{\circ}}%
,\cdots,\overline{n-j_{m_{1}}^{\circ}}\right\}  . \label{sets}%
\end{equation}
We observe that equality (\ref{sets}) is equivalent to
\[
\left\{  j_{1},\cdots,j_{m_{1}}\right\}  =\left\{  j_{1},\cdots,j_{m_{1}%
}\right\}  ^{-1}\text{ and }\left\{  j_{1}^{\circ},\cdots,j_{m_{1}}^{\circ
}\right\}  =\left\{  j_{1}^{\circ},\cdots,j_{m_{1}}^{\circ}\right\}  ^{-1}.
\]
Now using the fact that
\[
\max\left(  \left\{  j_{1},\cdots,j_{m_{1}}\right\}  \right)  \leq m-1\text{
and }\min\left(  \left\{  j_{1}^{\circ},\cdots,j_{m_{1}}^{\circ}\right\}
\right)  \geq m,
\]
together with Lemma \ref{impossible}, then statement (\ref{sets}) is absurd.
Thus, the corresponding coefficient in (\ref{determinant}) to the monomial
$r\left(  f\right)  $ is the nonzero complex number $a.$ So, if
\[
\Lambda=\left\{  \mathbf{A}^{k_{1}},\cdots,\mathbf{A}^{k_{m}},\mathbf{A}%
^{\ell_{1}}\mathbf{B},\cdots,\mathbf{A}^{\ell_{p}}\mathbf{B}\right\}
\]
then $\det\left(  \delta_{\Lambda}\left(  f\right)  \right)  $ is a
non-vanishing polynomial. This completes the proof for the first case. For the
other remaining cases, we have two other possibilities to consider. Either
$\Lambda=\left\{  \mathbf{A}^{k_{1}},\cdots,\mathbf{A}^{_{k_{n}}}\right\}  $
or $\Lambda=\left\{  \mathbf{A}^{k_{1}}\mathbf{B},\cdots,\mathbf{A}^{k_{n}%
}\mathbf{B}\right\}  .$ Let us suppose that $\Lambda=\left\{  \mathbf{A}%
^{k_{1}},\cdots,\mathbf{A}^{_{k_{n}}}\right\}  .$ Put
\[
a^{\prime}=\left\vert
\begin{array}
[c]{ccccc}%
1 & \omega^{k_{1}} & \cdots & \omega^{\left(  n-2\right)  k_{1}} &
\omega^{\left(  n-1\right)  k_{1}}\\
1 & \omega^{k_{2}} & \cdots & \omega^{\left(  n-2\right)  k_{2}} &
\omega^{\left(  n-1\right)  k_{2}}\\
\vdots & \vdots & \ddots & \vdots & \vdots\\
1 & \omega^{k_{n}} & \cdots & \omega^{\left(  n-2\right)  k_{n}} &
\omega^{\left(  n-1\right)  k_{n}}%
\end{array}
\right\vert .
\]
Then%
\[
\det\left(  \delta_{\Lambda}\left(  f\right)  \right)  =a^{\prime
}{\displaystyle\prod\limits_{j=0}^{n-1}}f_{j}.
\]
Appealing again to the fact that $n$ is prime, and since $a^{\prime}$ is a
minor of a Fourier matrix then $\det\left(  \delta_{\Lambda}\left(  f\right)
\right)  $ is also a non-vanishing polynomial. For the last case, let us
assume that $\Lambda=\left\{  \mathbf{A}^{k_{1}}\mathbf{B},\cdots
,\mathbf{A}^{k_{n}}\mathbf{B}\right\}  .$ Then
\[
\delta_{\Lambda}\left(  f\right)  =\left(
\begin{array}
[c]{cccc}%
f_{0} & \omega^{\ell_{1}}f_{n-1} & \cdots & \omega^{\left(  n-1\right)
\ell_{1}}f_{1}\\
f_{0} & \omega^{\ell_{2}}f_{n-1} & \cdots & \omega^{\left(  n-1\right)
\ell_{2}}f_{1}\\
\vdots & \vdots &  & \vdots\\
f_{0} & \omega^{\ell_{n-1}}f_{n-1} & \cdots & \omega^{\left(  n-1\right)
\ell_{p}}f_{1}%
\end{array}
\right)  .
\]
Using similar arguments to the second case, then
\[
\det\delta_{\Lambda}\left(  f\right)  =\left\vert
\begin{array}
[c]{cccc}%
1 & \omega^{\ell_{1}} & \cdots & \omega^{\left(  n-1\right)  \ell_{1}}\\
1 & \omega^{\ell_{2}} & \cdots & \omega^{\left(  n-1\right)  \ell_{2}}\\
\vdots & \vdots &  & \vdots\\
1 & \omega^{\ell_{n-1}} & \cdots & \omega^{\left(  n-1\right)  \ell_{p}}%
\end{array}
\right\vert {\displaystyle\prod\limits_{j=0}^{n-1}}f_{j}\neq0.
\]
This completes the proof.
\end{proof}

\begin{example}
Let $n=7$. Let us suppose that we pick a subset $\Lambda\ $of $\Sigma$ of
cardinality four such that $\delta_{\Lambda}\left(  \left[  f_{0},f_{1}%
,f_{2},f_{3},f_{4},f_{5},f_{6}\right]  ^{T}\right)  ^{T}$ is equal to
\[
\left(
\begin{array}
[c]{ccccccc}%
f_{0} & f_{0} & f_{0} & f_{0} & f_{0} & f_{0} & f_{0}\\
f_{1}e^{\frac{2}{7}i\pi k_{1}} & f_{1}e^{\frac{2}{7}i\pi k_{2}} &
f_{1}e^{\frac{2}{7}i\pi k_{3}} & f_{1}e^{\frac{2}{7}i\pi k_{4}} &
f_{6}e^{\frac{2}{7}i\pi\ell_{1}} & f_{6}e^{\frac{2}{7}i\pi\ell_{2}} &
f_{6}e^{\frac{2}{7}i\pi\ell_{3}}\\
f_{2}e^{\frac{4}{7}i\pi k_{1}} & f_{2}e^{\frac{4}{7}i\pi k_{2}} &
f_{2}e^{\frac{4}{7}i\pi k_{3}} & f_{2}e^{\frac{4}{7}i\pi k_{4}} &
f_{5}e^{\frac{4}{7}i\pi\ell_{1}} & f_{5}e^{\frac{4}{7}i\pi\ell_{2}} &
f_{5}e^{\frac{4}{7}i\pi\ell_{3}}\\
f_{3}e^{\frac{6}{7}i\pi k_{1}} & f_{3}e^{\frac{6}{7}i\pi k_{2}} &
f_{3}e^{\frac{6}{7}i\pi k_{3}} & f_{3}e^{\frac{6}{7}i\pi k_{4}} &
f_{4}e^{\frac{6}{7}i\pi\ell_{1}} & f_{4}e^{\frac{6}{7}i\pi\ell_{2}} &
f_{4}e^{\frac{6}{7}i\pi\ell_{3}}\\
f_{4}e^{\frac{8}{7}i\pi k_{1}} & f_{4}e^{\frac{8}{7}i\pi k_{2}} &
f_{4}e^{\frac{8}{7}i\pi k_{3}} & f_{4}e^{\frac{8}{7}i\pi k_{4}} &
f_{3}e^{\frac{8}{7}i\pi\ell_{1}} & f_{3}e^{\frac{8}{7}i\pi\ell_{2}} &
f_{3}e^{\frac{8}{7}i\pi\ell_{3}}\\
f_{5}e^{\frac{10}{7}i\pi k_{1}} & f_{5}e^{\frac{10}{7}i\pi k_{2}} &
f_{5}e^{\frac{10}{7}i\pi k_{3}} & f_{5}e^{\frac{10}{7}i\pi k_{4}} &
f_{2}e^{\frac{10}{7}i\pi\ell_{1}} & f_{2}e^{\frac{10}{7}i\pi\ell_{2}} &
f_{2}e^{\frac{10}{7}i\pi\ell_{3}}\\
f_{6}e^{\frac{12}{7}i\pi k_{1}} & f_{6}e^{\frac{12}{7}i\pi k_{2}} &
f_{6}e^{\frac{12}{7}i\pi k_{3}} & f_{6}e^{\frac{12}{7}i\pi k_{4}} &
f_{1}e^{\frac{12}{7}i\pi\ell_{1}} & f_{1}e^{\frac{12}{7}i\pi\ell_{2}} &
f_{1}e^{\frac{12}{7}i\pi\ell_{3}}%
\end{array}
\right)  .
\]
The monomial isolated in the proof of Proposition \ref{prime case} to show
that
\[
\det\left(  \delta_{\Lambda}\left(  \left[  f_{0},f_{1},f_{2},f_{3}%
,f_{4},f_{5},f_{6}\right]  ^{T}\right)  \right)
\]
is a non-trivial polynomial is: $f_{0}f_{1}^{2}f_{2}^{2}f_{3}^{2}.$ The
coefficient of $f_{0}f_{1}^{2}f_{2}^{2}f_{3}^{2}$ in the polynomial
$\det\left(  \delta_{\Lambda}\left(  \left[  f_{0},f_{1},f_{2},f_{3}%
,f_{4},f_{5},f_{6}\right]  ^{T}\right)  \right)  $ is given by
\begin{equation}
\left\vert
\begin{array}
[c]{cccc}%
1 & 1 & 1 & 1\\
e^{\frac{2}{7}i\pi k_{1}} & e^{\frac{2}{7}i\pi k_{2}} & e^{\frac{2}{7}i\pi
k_{3}} & e^{\frac{2}{7}i\pi k_{4}}\\
e^{\frac{4}{7}i\pi k_{1}} & e^{\frac{4}{7}i\pi k_{2}} & e^{\frac{4}{7}i\pi
k_{3}} & e^{\frac{4}{7}i\pi k_{4}}\\
e^{\frac{6}{7}i\pi k_{1}} & e^{\frac{6}{7}i\pi k_{2}} & e^{\frac{6}{7}i\pi
k_{3}} & e^{\frac{6}{7}i\pi k_{4}}%
\end{array}
\right\vert \left\vert
\begin{array}
[c]{ccc}%
e^{\frac{8}{7}i\pi\ell_{1}} & e^{\frac{8}{7}i\pi\ell_{2}} & e^{\frac{8}{7}%
i\pi\ell_{3}}\\
e^{\frac{10}{7}i\pi\ell_{1}} & e^{\frac{10}{7}i\pi\ell_{2}} & e^{\frac{10}%
{7}i\pi\ell_{3}}\\
e^{\frac{12}{7}i\pi\ell_{1}} & e^{\frac{12}{7}i\pi\ell_{2}} & e^{\frac{12}%
{7}i\pi\ell_{3}}%
\end{array}
\right\vert . \label{minors}%
\end{equation}
Furthermore, with some formal calculations, it is easy to see that
(\ref{minors}) is equal to%
\begin{align*}
&  -\left(  e^{\frac{2}{7}i\pi k_{1}}-e^{\frac{2}{7}i\pi k_{2}}\right)
\left(  e^{\frac{2}{7}i\pi k_{1}}-e^{\frac{2}{7}i\pi k_{3}}\right)
\allowbreak\left(  e^{\frac{2}{7}i\pi k_{1}}-e^{\frac{2}{7}i\pi k_{4}}\right)
\\
&  \left(  e^{\frac{2}{7}i\pi k_{2}}-e^{\frac{2}{7}i\pi k_{3}}\right)
\allowbreak\left(  e^{\frac{2}{7}i\pi k_{2}}-e^{\frac{2}{7}i\pi k_{4}}\right)
\left(  e^{\frac{2}{7}i\pi k_{3}}-e^{\frac{2}{7}i\pi k_{4}}\right) \\
&  \left(  \allowbreak e^{\frac{8}{7}i\pi\ell_{1}}e^{\frac{3}{2}(\frac{8}%
{7}i\pi\ell_{2})}e^{\frac{5}{4}(\frac{8}{7}i\pi\ell_{3})}-e^{\frac{8}{7}%
i\pi\ell_{1}}e^{\frac{5}{4}(\frac{8}{7}i\pi\ell_{2})}e^{\frac{3}{2}(\frac
{8}{7}i\pi\ell_{3})}\right. \\
&  -e^{\frac{3}{2}(\frac{8}{7}i\pi\ell_{1})}e^{\frac{8}{7}i\pi\ell_{2}%
}e^{\frac{5}{4}(\frac{8}{7}i\pi\ell_{3})}+e^{\frac{3}{2}(\frac{8}{7}i\pi
\ell_{1})}e^{\frac{5}{4}(\frac{8}{7}i\pi\ell_{2})}e^{\frac{8}{7}i\pi\ell_{3}%
}\\
&  \left.  +e^{\frac{5}{4}(\frac{8}{7}i\pi\ell_{1})}e^{\frac{8}{7}i\pi\ell
_{2}}e^{\frac{3}{2}(\frac{8}{7}i\pi\ell_{3})}-e^{\frac{5}{4}(\frac{8}{7}%
i\pi\ell_{1})}e^{\frac{3}{2}(\frac{8}{7}i\pi\ell_{2})}e^{\frac{8}{7}i\pi
\ell_{3}}\right)  .
\end{align*}

\end{example}

\subsection{Proof of Theorem \ref{main result}}

The proofs of Part $1$ and $2$ of Theorem \ref{main result} follow from
Corollary \ref{even case}, Proposition \ref{prime case} and Lemma
\ref{intertwines}. Finally, Part $3$ is a direct consequence of Part $2.$

\subsection{Proof of Theorem \ref{main 3}}

Let $\tau$ be a unitary irreducible representation of $D_{2n}.$ The
classification of the irreducible representations of the Dihedral group is
well-understood (see Page $36,$ \cite{Serre}). When $n$ is even, then up to
equivalence there are four one-dimensional irreducible representations. When
$n$ is odd, up to equivalence there are two one-dimensional irreducible
representations of the Dihedral group. If $\tau$ is an irreducible
representation of $D_{2n}$ which is not a character then it is well-known that
$\tau$ must be a two-dimensional representation obtained by inducing some
character of the normal subgroup generated by $r$ to $D_{2n}.$ If $\tau$ is a
character then Part $1$ holds obviously. In fact, for any nonzero vector $v\in%
\mathbb{C}
,$ the set $\tau\left(  D_{2n}\right)  v$ has the Haar property. Now, suppose
that $\tau$ is not a character. Furthermore, assume that $n$ is odd. Then
there exists $j,$ $1\leq j\leq n-1$ and a realization of the representation
$\tau$ such that $\tau=\tau_{j},$ where
\begin{equation}
\tau_{j}\left(  r\right)  =\left(
\begin{array}
[c]{cc}%
e^{\frac{2\pi ji}{n}} & 0\\
0 & e^{-\frac{2\pi ji}{n}}%
\end{array}
\right)  \text{ and }\tau_{j}\left(  s\right)  =\left(
\begin{array}
[c]{cc}%
0 & 1\\
1 & 0
\end{array}
\right)  . \label{rep}%
\end{equation}
Similarly, in the case where $n$ is even, there exists $j,$ $j\in\left\{
1,\cdots,n-1\right\}  -\left\{  \frac{n}{2}\right\}  $ such that $\tau
=\tau_{j}$ is as described in (\ref{rep}). For Part $2,$ assume that $n$ is
prime. There are three main cases to consider. Let $v=\left(
\begin{array}
[c]{cc}%
v_{1} & v_{2}%
\end{array}
\right)  ^{T}\in\mathbb{C}^{2}$. Let us suppose that $M=\tau\left(  r\right)
^{k_{1}},N=\tau\left(  r\right)  ^{k_{2}}$ such that $k_{1}\neq k_{2}$ and
$k_{1},k_{2}\in\mathbb{Z}_{n}.$ Then%
\[
\left\vert
\begin{array}
[c]{cc}%
Mv & Nv
\end{array}
\right\vert =\left\vert
\begin{array}
[c]{cc}%
v_{1}e^{2i\pi\frac{j}{n}k_{1}} & v_{1}e^{2i\pi\frac{j}{n}k_{2}}\\
v_{2}e^{-2i\pi\frac{j}{n}k_{1}} & v_{2}e^{-2i\pi\frac{j}{n}k_{2}}%
\end{array}
\right\vert =2iv_{1}v_{2}\sin\left(  \frac{2\pi j\left(  k_{1}-k_{2}\right)
}{n}\right)  .
\]
Next, let us suppose that $M=\tau\left(  r\right)  ^{k_{1}},N=\tau\left(
r^{k_{2}}s\right)  $ where $k_{1},k_{2}\in%
\mathbb{Z}
_{n}.$ Then
\begin{align*}
\left\vert
\begin{array}
[c]{cc}%
Mv & Nv
\end{array}
\right\vert  &  =\left\vert
\begin{array}
[c]{cc}%
v_{1}e^{2i\pi\frac{j}{n}k_{1}} & v_{2}e^{2i\pi\frac{j}{n}k_{2}}\\
v_{2}e^{-2i\pi\frac{j}{n}k_{1}} & v_{1}e^{-2i\pi\frac{j}{n}k_{2}}%
\end{array}
\right\vert \\
&  =\left(  v_{1}^{2}-v_{2}^{2}\right)  \cos\left(  \frac{2\pi j\left(
k_{1}-k_{2}\right)  }{n}\right)  +i\left(  v_{1}^{2}+v_{2}^{2}\right)
\sin\left(  \frac{2\pi j\left(  k_{1}-k_{2}\right)  }{n}\right)  .
\end{align*}
Finally, let us suppose that
\[
M=\tau\left(  r^{k_{1}}s\right)  ,N=\tau\left(  r^{k_{2}}s\right)
\]
such that $k_{1}\neq k_{2}$ and $k_{1},k_{2}\in%
\mathbb{Z}
_{n}.$ Then
\[
\left\vert
\begin{array}
[c]{cc}%
Mv & Nv
\end{array}
\right\vert =\left\vert
\begin{array}
[c]{cc}%
v_{2}e^{2i\pi\frac{j}{n}k_{1}} & v_{2}e^{2i\pi\frac{j}{n}k_{2}}\\
v_{1}e^{-2i\pi\frac{j}{n}k_{1}} & v_{1}e^{-2i\pi\frac{j}{n}k_{2}}%
\end{array}
\right\vert =2iv_{1}v_{2}\sin\left(  \frac{2\pi j\left(  k_{1}-k_{2}\right)
}{n}\right)  .
\]
Next, it is easy to check that the polynomials
\[
p\left(  v_{1},v_{2}\right)  =2iv_{1}v_{2}\sin\left(  \frac{2\pi j\left(
k_{1}-k_{2}\right)  }{n}\right)  ,
\]
and%
\begin{equation}
p^{\prime}\left(  v_{1},v_{2}\right)  =\left(  v_{1}^{2}-v_{2}^{2}\right)
\cos\left(  \frac{2\pi j\left(  k_{1}-k_{2}\right)  }{n}\right)  +i\left(
v_{1}^{2}+v_{2}^{2}\right)  \sin\left(  \frac{2\pi j\left(  k_{1}%
-k_{2}\right)  }{n}\right)  \label{po}%
\end{equation}
are all non-trivial whenever $n$ is prime. Indeed, if $n$ is prime,
$k_{1}-k_{2}\in\left\{  1,\cdots,n-1\right\}  $, $j\in\left\{  1,\cdots
,n-1\right\}  ,$ then the real number $\frac{2\pi j\left(  k_{1}-k_{2}\right)
}{n}$ can never be equal to $\pi\ell$ where $\ell\in\mathbb{Z}.$ So, $p\left(
v_{1},v_{2}\right)  $ is a nonzero homogeneous polynomial in $v_1, v_2.$ Next, since the
coefficient of the monomial $v_{1}^{2}$ in (\ref{po}) is given by
$e^{i\frac{2\pi j\left(  k_{1}-k_{2}\right)  }{n}}$ then $p^{\prime}\left(
v_{1},v_{2}\right)  $ is a nonzero homogeneous polynomial as well. So when $n$
is prime, for any distinct matrices $M,N\in\tau\left(  D_{2n}\right)  ,$ the
set $\left\{  Mv,Nv\right\}  $ is linearly independent for almost every
$v\in\mathbb{C}^{2}.$ For Part $4$, let us assume that $n$ is an odd composite
number. Then there exist odd natural numbers $n_{1},n_{2}\in\mathbb{N}$ such
that $n=n_{1}n_{2}$ and $n_{k}\notin\left\{  1,n\right\}  .$ Next, we observe
that
\[
\left(
\begin{array}
[c]{cc}%
e^{\frac{2\pi n_{1}i}{n_{1}n_{2}}\times n_{2}} & 0\\
0 & e^{-\frac{2\pi n_{1}i}{n_{1}n_{2}}\times n_{2}}%
\end{array}
\right)  -\left(
\begin{array}
[c]{cc}%
1 & 0\\
0 & 1
\end{array}
\right)  =\left(
\begin{array}
[c]{cc}%
0 & 0\\
0 & 0
\end{array}
\right)  .
\]
So for any vector $v\in%
\mathbb{C}
^{2},$ the set $\left\{  v,\tau_{n_{1}}\left(  r^{n_{2}}\right)  v\right\}  $
is linearly dependent. Now, let us consider the representation of the Dihedral
group $\tau_{1}$ defined such that
\[
\tau_{1}\left(  r\right)  =\left(
\begin{array}
[c]{cc}%
e^{\frac{2\pi i}{n}} & 0\\
0 & e^{-\frac{2\pi i}{n}}%
\end{array}
\right)  \text{ and }\tau_{1}\left(  s\right)  =\left(
\begin{array}
[c]{cc}%
0 & 1\\
1 & 0
\end{array}
\right)  .
\]
For distinct matrices $M,N\in\tau\left(  D_{2n}\right)  $, either
\[
\left\vert
\begin{array}
[c]{cc}%
Mv & Nv
\end{array}
\right\vert =2iv_{1}v_{2}\sin\left(  \frac{2\pi\left(  k_{1}-k_{2}\right)
}{n}\right)  ,\text{ }k_{1}-k_{2}\in\left\{  1,\cdots,m-1\right\}
\]
or
\[
\left\vert
\begin{array}
[c]{cc}%
Mv & Nv
\end{array}
\right\vert =\left(  v_{1}^{2}-v_{2}^{2}\right)  \cos\left(  \frac{2\pi\left(
k_{1}-k_{2}\right)  }{n}\right)  +i\left(  v_{1}^{2}+v_{2}^{2}\right)
\sin\left(  \frac{2\pi\left(  k_{1}-k_{2}\right)  }{n}\right)  ,\text{ }%
\]
where $k_{1},k_{2}\in\left\{  1,\cdots,n-1\right\}  .$ Since $n$ is assumed to
be odd and because $k_{1}-k_{2}\in\left\{  1,\cdots,n-1\right\}  ;$ it is easy
to see that $2iv_{1}v_{2}\sin\left(  \frac{2\pi\left(  k_{1}-k_{2}\right)
}{n}\right)  $ is a non-trivial homogeneous polynomial. To show this, let us
suppose that for $k=k_{1}-k_{2},$ $\frac{2\pi k}{n}=\pi\ell\text{ for some
}\ell\in\mathbb{Z}.$ Then $k=\frac{1}{2}n\ell.$ Since $n$ is odd then
$\ell=2\ell^{\prime}$ for some $\ell^{\prime}\in\mathbb{Z}.$ It follows that
$k$ is a multiple of $n,$ and this is impossible. Next, the fact that
\[
\left(  v_{1}^{2}-v_{2}^{2}\right)  \cos\left(  \frac{2\pi\left(  k_{1}%
-k_{2}\right)  }{n}\right)  +i\left(  v_{1}^{2}+v_{2}^{2}\right)  \sin\left(
\frac{2\pi\left(  k_{1}-k_{2}\right)  }{n}\right)
\]
is a nonzero polynomial was already proved for Part $2$. Finally for Part $3,$
let us assume that $n=2j$ is even, and $j\in\left\{  1,\cdots,n-1\right\}
-\left\{  \frac{n}{2}\right\}  $. If $j$ is odd, then it is easy to see that
\[
\left(
\begin{array}
[c]{cc}%
1 & 0\\
0 & 1
\end{array}
\right)  +\left(
\begin{array}
[c]{cc}%
e^{\frac{2\pi ji}{n}\frac{n}{2}} & 0\\
0 & e^{-\frac{2\pi ji}{n}\frac{n}{2}}%
\end{array}
\right)  =\left(
\begin{array}
[c]{cc}%
0 & 0\\
0 & 0
\end{array}
\right)  .
\]
Also, if $j$ is even then
\[
-\left(
\begin{array}
[c]{cc}%
1 & 0\\
0 & 1
\end{array}
\right)  +\left(
\begin{array}
[c]{cc}%
e^{\frac{2\pi ji}{n}\frac{n}{2}} & 0\\
0 & e^{-\frac{2\pi ji}{n}\frac{n}{2}}%
\end{array}
\right)  =\left(
\begin{array}
[c]{cc}%
0 & 0\\
0 & 0
\end{array}
\right)  .
\]
Thus it is not possible to find a vector in $%
\mathbb{C}
^{2}$ such that for any distinct matrices $M,N\in\tau\left(  D_{2n}\right)  ,$
the set $\left\{  Mv,Nv\right\}  $ is linearly independent. This completes the proof.

\section{Examples}

\begin{example}
Let $n=3.$ For any subset $\Lambda$ of $\Sigma$ of cardinality $3,$ it is not
too hard to see that the polynomial $\delta_{\Lambda}\left(  \left(
1,z,z^{4}\right)  ^{T}\right)  $ is a nonzero polynomial of degree at most $8$
in the variable $z.$ Thus, given any algebraic number $x$ of degree at least
$9$ over the cyclotomic field $\mathbb{Q}\left(  e^{\frac{2\pi i}{3}}\right)
$ or given any transcendental number $x,$ $x$ cannot be a root of the
polynomial $\delta_{\Lambda}\left(  \left(  1,z,z^{4}\right)  ^{T}\right)  $.
It follows that the set $\Gamma\mathbf{F}^{-1}\left(  1,z,z^{4}\right)  ^{T}$
is a frame in $\mathbb{C}^{3}$ which is maximally robust to erasures. For
example, let
\[
v=\left(
\begin{array}
[c]{ccc}%
\frac{1}{3}\sqrt{3} & \frac{1}{3}\sqrt{3} & \frac{1}{3}\sqrt{3}\\
\frac{1}{3}\sqrt{3} & \frac{1}{2}i-\frac{1}{6}\sqrt{3} & -\frac{1}{6}\sqrt
{3}-\frac{1}{2}i\\
\frac{1}{3}\sqrt{3} & -\frac{1}{6}\sqrt{3}-\frac{1}{2}i & \frac{1}{2}%
i-\frac{1}{6}\sqrt{3}%
\end{array}
\right)  ^{-1}\left(
\begin{array}
[c]{c}%
1\\
\pi\\
\pi^{2}%
\end{array}
\right)  =\left(
\begin{array}
[c]{c}%
\sqrt{3}\left(  \frac{1}{3}\pi+\frac{1}{3}\pi^{2}+\frac{1}{3}\right) \\
-\frac{1}{6}\left(  \pi-1\right)  \left(  \sqrt{3}\pi-3i\pi+2\sqrt{3}\right)
\\
-\frac{1}{6}\left(  \pi-1\right)  \left(  3i\pi+\sqrt{3}\pi+2\sqrt{3}\right)
\end{array}
\right)  .
\]
Then
\[
\Gamma\left(
\begin{array}
[c]{c}%
\sqrt{3}\left(  \frac{1}{3}\pi+\frac{1}{3}\pi^{2}+\frac{1}{3}\right) \\
-\frac{1}{6}\left(  \pi-1\right)  \left(  \sqrt{3}\pi-3i\pi+2\sqrt{3}\right)
\\
-\frac{1}{6}\left(  \pi-1\right)  \left(  3i\pi+\sqrt{3}\pi+2\sqrt{3}\right)
\end{array}
\right)
\]
is a frame in $\mathbb{C}^{3}$ which is maximally robust to erasures.
\end{example}

\begin{example}
Let $n=5.$ Put
\[
v=\left(
\begin{array}
[c]{ccccc}%
i & -i & 1 & 1+i & 2-i
\end{array}
\right)  ^{T}.
\]
Using Mathematica, we are able to show that $\Gamma v$ is a frame in
$\mathbb{C}^{5}$ which is maximally robust to erasures. Put
\[
M_{\Gamma}\left(  v\right)  =\left(
\begin{array}
[c]{ccccc}%
i & -i & 1 & 1+i & 2-i\\
2-i & i & -i & 1 & 1+i\\
1+i & 2-i & i & -i & 1\\
1 & 1+i & 2-i & i & -i\\
-i & 1 & 1+i & 2-i & i\\
i & 2-i & 1+i & 1 & -i\\
-i & i & 2-i & 1+i & 1\\
1 & -i & i & 2-i & 1+i\\
1+i & 1 & -i & i & 2-i\\
2-i & 1+i & 1 & -i & i
\end{array}
\right)  .
\]
Each row of the matrix above corresponds to an element of the orbit of $v.$
Thus every sub-matrix of $M_{\Gamma}\left(  v\right)  $ of order five is invertible.
\end{example}

\end{document}